\definecolor{myblue}{rgb}{0.153,0.322,0.706}
\newcommand{\be}{\begin{equation}}
\newcommand{\ee}{\end{equation}}
\newcommand{\tG}{\tilde G}
\newcommand{\tQ}{\tilde Q}
\newcommand{\bb}{\bar b}
\newcommand{\eqdist}{\overset{d}{=}}
\numberwithin{equation}{section}
\newtheorem{theorem}{Theorem}[section]
\newtheorem{lemma}[theorem]{Lemma}
\newtheorem{remark}[theorem]{Remark}
\newcommand{\R}{\mathbb{R}}
\newcommand{\N}{\mathbb{N}}
\newcommand{\PP}{\mathbb{P}}
\newcommand{\EE}{\mathbb{E}}
\newcommand{\eee}{\mathrm{e}}
\newcommand{\ddd}{\mathrm{d}}
\newcommand{\Ai}{\operatorname{Ai}}
\newcommand{\AI}{\operatorname{AI}}
\DeclareMathOperator{\erf}{erf}
\newcommand{\Var}{\mathbb{V}\mathrm{ar}}
\begin{document}


\title{Properties of additive functionals of Brownian motion with resetting}

\author{Frank den Hollander}
\email{denholla@math.leidenuniv.nl}
\affiliation{Mathematical Institute, Leiden University, 2300 RA Leiden, The Netherlands}

\author{Satya N. Majumdar}
\email{satya.majumdar@u-psud.fr} 
\affiliation{Laboratoire de Physique Statistique et Mod\`eles Statistiques, 
UMR 8626, Universit\'e Paris-Sud, Orsay 91405, France}

\author{Janusz M. Meylahn}
\email{j.m.meylahn@math.leidenuniv.nl}
\affiliation{Mathematical Institute, Leiden University, 2300 RA Leiden, The Netherlands}

\author{Hugo Touchette}
\email{htouchet@alum.mit.edu, htouchette@sun.ac.za}
\affiliation{National Institute for Theoretical Physics (NITheP), Stellenbosch 7600, South Africa}
\affiliation{Institute of Theoretical Physics, Department of Physics, Stellenbosch University, 
Stellenbosch 7600, South Africa}

\date{\today}

\begin{abstract}
We study the distribution of additive functionals of reset Brownian motion, a variation of 
normal Brownian motion in which the path is interrupted at a given rate and placed back 
to a given reset position. Our goal is two-fold: (1) For general functionals, we derive a large 
deviation principle in the presence of resetting and identify the large deviation rate function 
in terms of a variational formula involving large deviation rate functions without resetting. 
(2) For three examples of functionals (positive occupation time, area and absolute area), we 
investigate the effect of resetting by computing distributions and moments, using a 
formula that links the generating function with resetting to the generating function 
without resetting. 
\end{abstract}

\keywords{Brownian motion, additive functional, resetting, scaling, large deviations}

\maketitle

\section{Introduction}

In this paper we study a variation of Brownian motion (BM) that includes resetting events 
at random times. Let $(W_t)_{t \geq 0}$ be a BM on $\R$ and consider a Poisson process 
on $[0,\infty)$ with intensity $r \in (0,\infty)$ and law $\PP$, producing $N(T)$ random 
points $\{\sigma_{i}\}_{i=1}^{N(T)}$ in the time interval $[0, T]$, satisfying $\EE[N(T)] = rT$. 
From these two processes, we construct the \emph{reset Brownian motion} (rBM),
$(W^r_t)_{t \geq 0}$, by `pasting together' $N(T)$ independent trajectories of the BM, 
all starting from a reset position $x_* \in \R$ and evolving freely over the successive time 
lapses of length $\tau_i$ with
\begin{equation}
\tau_i = \sigma_{i+1} - \sigma_{i}, \qquad i=0,\ldots,N(T)-1,
\end{equation}
with $\sigma_0=0$. More precisely, $W^r_t = x_* + W^i_t$ for $t \in [\sigma_{i},\sigma_{i+1})$ with 
$(W^i_t)_{t \geq 0}$, $i=0,\ldots,N(T)-1$, independent BMs starting at $0$. Without loss of 
generality, we assume that $x_*=0$. We denote by $\PP_{r}$ the probability with respect to 
rBM with reset rate $r$.

The properties of rBM, and reset processes in general \cite{montero2017}, have been the 
subject of several recent studies, related to random searches and randomized algorithms 
\cite{Evans11, Evans13, Kusmierz14,luby1993,Avrachenkov13,Avrachenkov17,Chechkin18,Belan18,Bodrova19} (which can be made more efficient by the 
addition of resetting \cite{Evans11a}), queueing theory (where resetting accounts for the 
accidental clearing of queues or buffers), as well as birth-death processes 
\cite{Pakes78, Brockwell85, Kyriakidis94, Pakes97, manrubia1999, Dharmaraja15} (in which 
a population is drastically reduced as a result of natural disasters or catastrophes). In 
biology, the attachment, targeting and transcription dynamics of enzymes, proteins and 
other bio-molecules can also be modelled with reset processes 
\cite{Benichou07, Harris17,visco2010,Meylahn15,reuveni2016,roldan2016,pal2017}. 

Resetting has the effect of creating a `confinement' around the reset position, which can bring 
the process from being non-stationary to being stationary. The simplest example is rBM, which 
has a stationary density $\rho$ given by \cite{Evans11}
\begin{equation}
\rho(x) = \sqrt{\frac{r}{2}}\, e^{-\sqrt{2r} |x|}, \quad x\in\R.
\end{equation}
The motivation for the present paper is to study the effect of the confinement on the distribution 
of additive functionals of rBM of the general form
\begin{equation}
\label{eq:FTdef}
F_T = \int_0^T f(W_t^r) \, \ddd t,
\end{equation}
where $f$ is a given $\R$-valued measurable function. We are especially interested in studying the 
effect of resetting on the large deviation properties of these functionals, and to determine whether 
resetting is `strong enough' to bring about a large deviation principle (LDP) for the sequence of 
random variables $(T^{-1}F_T)_{T>0}$ when it does not satisfy the LDP without resetting.

For this purpose, we use a recent result \cite{Meylahn15,Meylahn15a} based on the renewal structure 
of reset processes that links the Laplace transform of the Feynman-Kac generating function of $F_T$ 
with resetting to the same generating function without resetting. Additionally, we derive a variational formula 
for the large deviation rate function of $(T^{-1}F_T)_{T>0}$, obtained by combining the LDPs for the 
frequency of resets, the duration of the reset periods, and the value of $F_T$ in between resets. This 
variational formula complements the result based on generating functions by providing insight into 
how a large deviation event is created in terms of the constituent processes. These two results are 
stated in Secs.~\ref{sec:res}--\ref{sec:genratepropproof} and, in principle, apply to any functional $F_T$ 
of the type defined in \eqref{eq:FTdef}. We illustrate them for three particular functionals: 
\begin{equation}
\label{eq:ABC}
A_T = \int_0^T 1_{[0,\infty)}(W^r_t)\,\ddd t, \qquad
B_T = \int_0^T W^r_t\,\ddd t, \qquad
C_T = \int_0^T |W^r_t|\,\ddd t,
\end{equation}
i.e., the positive occupation time, the area and the absolute area (the latter can also be interpreted as 
the area of rBM reflected at the origin). These functionals are discussed in Secs.~\ref{sec:occtime}, 
\ref{sec:area} and \ref{sec:absolutearea}, respectively.

It seems possible to extend part of our results to general diffusion processes with resetting, 
although we will not attempt to do so in this paper. The advantage of focusing on rBM is 
that we can obtain exact results.

\section{Two theorems}
\label{sec:res}

In this section we present two theorems that will be used to study distributions (Theorem~\ref{thm:RR}) 
and large deviations (Theorem~\ref{thm:var}) associated with additive functionals of rBM. 

The first result is based on the generating function of $F_T$: 
\begin{equation}
\label{eq:genfunc}
G_{r}(k, T) = \EE_{r}\big[\eee ^{kF_{T}}\big], \qquad k \in \R,\,T \in [0,\infty),
\end{equation}
where $\EE_{r}$ denotes the expectation with respect to rBM with rate $r$. The Laplace transform \cite{Widder41}
of this function is defined as 
\begin{equation}
\label{eq:lapgenfunc}
\tG_{r}(k, s) = \int_{0}^{\infty} \ddd T\,\eee ^{-sT}\,G_{r}( k,T),\qquad k \in \R,\, s \in [0,\infty).
\end{equation}
Both may be infinite for certain ranges of the variables.  The same quantities are defined analogously 
for the reset-free process and are given the subscript $0$. The following theorem expresses the
reset Laplace transform in terms of the reset-free Laplace transform. 

\begin{theorem}
\label{thm:RR}
If $k \in \R$ and $s \in [0,\infty)$ are such that $r\tG_{0}(k, r+s)<1$, then
\begin{equation}
\label{eq:RR}
\tG_{r}(k, s) = \frac{\tG_{0}(k, r+s)}{1-r \tG_{0}(k, r+s)}.
\end{equation}
\end{theorem}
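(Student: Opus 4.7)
The plan is to exploit the renewal structure of rBM by conditioning on the first reset time $\tau_1$, which is exponentially distributed with rate $r$ and independent of the underlying Brownian trajectories. This conditioning produces a renewal-type integral equation for $G_r(k,T)$ of convolution form, which upon Laplace transformation in $T$ collapses into a linear algebraic equation that can be inverted explicitly under the stated nondegeneracy condition.

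\textbf{Deriving the renewal equation.} I would split according to whether $\tau_1 > T$ or $\tau_1 = \tau \in (0,T)$. On $\{\tau_1 > T\}$, of probability $\eee^{-rT}$, no reset occurs and $W^r$ coincides with a free BM on $[0,T]$, so that the functional has generating function $G_0(k,T)$. On $\{\tau_1 = \tau\}$, of density $r\eee^{-r\tau}$, the functional splits as $F_T = \int_0^{\tau} f(W_t)\,\ddd t + \int_{\tau}^T f(W_t^r)\,\ddd t$; by the memoryless property of the Poisson reset process and the definition of rBM (each reset produces a fresh, independent BM started at $0$), the two summands are conditionally independent given $\tau$, and the second one has the distribution of $F_{T-\tau}$ under $\PP_r$. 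Taking expectations yields the renewal equation
\begin{equation*}
G_r(k,T) = \eee^{-rT} G_0(k,T) + r\int_0^T \eee^{-r\tau}\,G_0(k,\tau)\,G_r(k,T-\tau)\,\ddd\tau.
\end{equation*}

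\textbf{Laplace transform and inversion.} Multiplying by $\eee^{-sT}$ and integrating over $T\in[0,\infty)$, the first term gives $\tG_0(k,r+s)$, since the exponentials combine into $\eee^{-(r+s)T}$, and the second term is $r$ times the Laplace transform of a convolution in $T$, which by the convolution theorem factors as $r\,\tG_0(k,r+s)\,\tG_r(k,s)$. This produces $\tG_r(k,s) = \tG_0(k,r+s) + r\,\tG_0(k,r+s)\,\tG_r(k,s)$; solving and using $r\tG_0(k,r+s)<1$ to divide by $1-r\tG_0(k,r+s)$ yields the claimed identity.

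\textbf{Expected main difficulty.} The conceptual content sits in the renewal decomposition, and the distributional identity it relies on is built into the construction of rBM, so the argument is largely computational. The delicate point is to justify interchanging the expectation with the $T$-integration and to apply the convolution theorem unambiguously when $F_T$ can take both signs: here the hypothesis $r\tG_0(k,r+s)<1$ is exactly what is needed, either directly to ensure absolute convergence of the geometric-series iteration one obtains by solving the renewal equation by successive substitution, or by first establishing the identity for $k$ in a regime where every integral is manifestly finite and then extending by analytic continuation in $k$.
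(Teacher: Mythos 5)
Your proposal is correct and follows essentially the same route as the paper: both condition on the first reset time (first reset after $T$ versus at $t\in(0,T]$), obtain the same renewal convolution equation for $G_r(k,T)$, pass to the Laplace transform in $T$, and solve the resulting linear relation using $r\tG_0(k,r+s)<1$. The only cosmetic difference is that the paper writes the no-reset contribution as $\int_T^\infty \ddd t\,r\eee^{-rt}\,\EE_0[\eee^{kF_T}]$ rather than $\eee^{-rT}G_0(k,T)$, and since the integrands are nonnegative the interchange of integrals you flag is immediate by Tonelli, so no analytic continuation is needed.
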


\begin{proof}
Theorem~\ref{thm:RR} was proved in \cite{Meylahn15} with the help of a renewal argument 
relating the process with resetting to the one without resetting. For completeness we write
out the proof. For fixed $T$, split according to whether the first reset takes place at $0 < t \leq T$ 
or $t>T$:  
\begin{equation}
\EE_{r}[\eee^{kF_T}\big] 
= \int_0^T \ddd t\,r\eee^{-rt}\,\EE_{0}\big[\eee^{kF_t}\big]\,\EE_{r}\big[\eee^{kF_{T-t}}\big]
+ \int_T^\infty \ddd t\,r\eee^{-rt}\,\EE_{0}\big[\eee^{kF_T}\big].
\end{equation}
Substitute this relation into \eqref{eq:genfunc} and afterwards into \eqref{eq:lapgenfunc}, and 
interchange the integration over $T$ and $t$, to get
\begin{equation}
\begin{aligned}
\tG_{r}(k, s) &= \int_0^\infty \ddd t\,r\eee^{-rt}\,\EE_{0}\big[\eee^{kF_t}\big]\,
\eee^{-st} \int_t^\infty \ddd T\,\eee^{-s(T-t)}\,\EE_{r}\big[\eee^{kF_{T-t}}\big]
+ \int_0^\infty \ddd T\,\eee^{-rT}\,\eee^{-sT}\,\EE_{0}\big[\eee^{kF_T}\big]\\
&=r\left(\int_0^\infty \ddd t\,\eee^{-(r+s)t}\,\EE_{0}\big[\eee^{kF_t}\big]\right)
\left(\int_0^\infty \ddd T'\,\eee^{-sT'}\,\EE_{r}\big[\eee^{kF_{T'}}\big]\right)
+ \int_0^\infty \ddd T\,\eee^{-(r+s)T}\,\EE_{0}\big[\eee^{kF_T}\big]\\
&= r \tG_{0}(k,r+s)\tG_{r}(k,s) + \tG_{0}(k, r+s).
\end{aligned} 
\end{equation}
Solving for $\tG_{r}(k, s)$, we get \eqref{eq:RR}. 
\end{proof}

As shown in \cite{Meylahn15}, Theorem~\ref{thm:RR} can be used to study the effect of resetting 
on the distribution of $F_T$. In particular, if the dominant singularity of $\tG_{r}(k, s)$ is a single 
pole, then Theorem~\ref{thm:RR} can be used to get the LDP with resetting, under the assumption 
that 
\begin{equation}
\label{eq:assexist}
\forall\,T>0\colon \qquad G_{0}(k,T) \text{ exists for $k$ in an open neighbourhood of $0$ in $\R$.}
\end{equation}
In Theorem~\ref{thm:var} below we show that, for every $r>0$, $(T^{-1}F_T)_{T>0}$ satisfies 
the LDP on $\R$ with speed $T$. Informally, this means that
\begin{equation}
\forall\,\phi \in \R\colon \qquad \frac{\PP_{r}(T^{-1}F_{T} \in \ddd \phi)}{\ddd \phi} 
= \eee^{-T\chi_r(\phi) + o(T)}, \qquad T \to \infty,
\end{equation}
where $\chi_r\colon\,\R \to [0,\infty)$ is the rate function. See Appendix~\ref{app:LDP} for the formal 
definition of the LDP. 

Theorem~\ref{thm:var} below provides a variational formula for $\chi_r$ in terms of the rate 
functions of the three constituent processes underlying $F_T$, namely (see \cite[Chapters I-II]{H00}):  
\begin{itemize}
\item[(1)]
The rate function for $(T^{-1}N(T))_{T>0}$, the number of resets per unit of time:
\begin{equation}
\label{ICramer}
I_{r}(n) = n\log\Big(\frac{n}{r}\Big) - n + r, \qquad n \in [0, \infty).
\end{equation}
\item[(2)]
The rate function for $(N^{-1} \sum_{i=1}^N \delta_{\tau_i})_{N\in\mathbb{N}}$, the empirical 
distribution of the duration of the reset periods:
\begin{equation}
\label{JSanov}
J_{r}(\mu) = h(\mu \mid \mathcal{E}_r), \qquad \mu \in \mathcal{P}([0,\infty)).
\end{equation}
Here, $\mathcal{P}([0, \infty))$ is the set of probability distributions on $[0,\infty)$, 
$\mathcal{E}_r$ is the exponential distribution with mean $1/r$, and $h(\cdot \mid \cdot)$
denotes the relative entropy
\begin{equation}
h(\mu \mid \nu) = \int_{0}^{\infty} \mu(dx) \log\left[\frac{\ddd\mu}{\ddd\nu}(x)\right],
\qquad \mu,\nu \in \mathcal{P}([0, \infty)).
\end{equation}
\item[(3)]
The rate function for $(N^{-1}\sum_{i=1}^N F_{\tau,i})_{N \in \mathbb{N}}$, the empirical 
average of i.i.d.\ copies of the \emph{reset-free} functional $F_\tau$ over a time $\tau$: 
\begin{equation}
\label{KCramer}
K_\tau(u) = \sup_{v \in \R}\, \{uv-M_\tau(v)\},
\qquad u \in \R,\,\tau \in [0,\infty).
\end{equation}
Here, $M_\tau(v) = \log \EE_0\big[\eee^{vF_\tau}\big]$ is the cumulant generating function of 
$F_\tau$ without reset and we require, for all $\tau \in [0,\infty)$, that $M_\tau$ exists in an open 
neighbourhood of $0$ in $\R$ (which is equivalent to \eqref{eq:assexist}). It is known that 
$K_\tau$ is smooth and strictly convex on the interior of its domain (see \cite[Chapter I]{H00}).
\end{itemize}

\begin{theorem}
\label{thm:var}
For every $r>0$, the family $(\PP_{r}(T^{-1}F_{T} \in \cdot\,))_{T>0}$ satisfies the LDP on $R$ 
with speed $T$ and with rate function 
$\chi_r$ given by
\begin{equation}
\label{eq:VarRate}
\chi_r(\phi) = \inf_{(n,\mu,w) \in \Phi(\phi)} 
\Big\{I_{r}(n) + n J_{r}(\mu) + n \int_{0}^{\infty} \mu(\ddd t)\,K_{t}(w(t)) \Big\},
\qquad \phi \in \R,
\end{equation}
where 
\begin{equation}
\Phi(\phi) = \left\{(n,\mu,w) \in [0, \infty) \times \mathcal{P}([0, \infty)) \times \mathcal{B}([0,\infty);\R)\colon\, 
n \int_{0}^{\infty} \mu(\ddd t)\,w(t) = \phi \right\} 
\end{equation}
with $\mathcal{B}([0,\infty);\R)$ the set of Borel-measurable functions from $[0,\infty)$ to $\R$.
\end{theorem}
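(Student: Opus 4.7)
The plan is to derive the LDP for $T^{-1} F_T$ by decomposing the functional along the successive reset epochs and then combining three constituent LDPs: the Cramér LDP with rate $I_r$ for the normalized Poisson count $N(T)/T$, a Sanov-type LDP with rate $J_r$ for the empirical distribution of the reset durations $\tau_i$, and a quenched Cramér-type LDP with rate $K_t$ for the reset-free functional over each reset interval.

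First I would write
\begin{equation}
F_T = \sum_{i=0}^{N(T)-1} F_{\tau_i,i} + R_T, \qquad R_T = \int_{\sigma_{N(T)}}^{T} f(W^r_u)\,\ddd u,
\end{equation}
where $F_{\tau_i,i} = \int_{\sigma_i}^{\sigma_{i+1}} f(W^r_u)\,\ddd u$ are, conditionally on the $\tau_i$'s, independent copies of the reset-free functional $F_{\tau_i}$ on an independent BM started at $0$. The boundary term $R_T$ corresponds to the last, incomplete reset interval of length at most $\tau_{N(T)}$; using \eqref{eq:assexist} together with the exponential tail of $\tau_{N(T)}$, one shows that $T^{-1}R_T$ is superexponentially small at speed $T$ and can be discarded. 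The pairs $\{(\tau_i, F_{\tau_i,i})\}_{i\geq 0}$ are i.i.d.\ under the joint reference law $\nu_0$, the distribution of $(\tau, F_\tau)$ with $\tau \sim \mathcal{E}_r$ and $F_\tau$ the reset-free functional over time $\tau$. Conditioning on $N(T) = \lfloor nT \rfloor$ and applying Sanov's theorem to the empirical pair distribution $\nu_N = N^{-1} \sum_i \delta_{(\tau_i, F_{\tau_i,i})}$ yields an LDP with speed $N$ and rate $h(\nu\mid\nu_0)$; coupling this with the Cramér LDP for $N(T)/T$ produces a joint LDP for $(N(T)/T,\nu_{N(T)})$ with speed $T$ and rate $I_r(n) + n\,h(\nu\mid\nu_0)$.

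The formula \eqref{eq:VarRate} then emerges in two reductions. The chain rule for relative entropy, applied to the disintegration $\nu(\ddd t,\ddd f) = \mu(\ddd t)\,\pi_t(\ddd f)$, gives $h(\nu\mid\nu_0) = J_r(\mu) + \int_0^\infty \mu(\ddd t)\,h(\pi_t\mid \rho_t)$, where $\rho_t$ is the reset-free law of $F_t$. Since $T^{-1}F_T$ equals $(N(T)/T)\cdot\int f\,\ddd\nu_{N(T)} + T^{-1}R_T$, the contraction principle identifies $\chi_r(\phi)$ as the infimum of $I_r(n) + n\,h(\nu\mid\nu_0)$ over all $(n,\nu)$ satisfying $n\int \nu(\ddd t,\ddd f)\,f = \phi$. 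Writing $w(t) = \int f\,\pi_t(\ddd f)$ converts the constraint into $n\int \mu(\ddd t)\,w(t) = \phi$, and the inner minimization over $\pi_t$ at fixed mean $w(t)$ is the classical Cramér variational problem whose value is $K_t(w(t))$ by \eqref{KCramer}. These two reductions together produce \eqref{eq:VarRate}.

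The principal obstacle is making rigorous the joint Sanov-type LDP for the empirical pair distribution in a topology strong enough that the projection $\nu \mapsto \int f\,\ddd\nu$ is continuous, despite $F_\tau$ being typically unbounded. This requires exponential tightness of $\nu_N$, to be extracted from the assumption \eqref{eq:assexist} of finite exponential moments of $M_\tau$ near zero, together with a strengthening of the weak topology that accommodates the unbounded integrand $f$. A secondary issue is handling the random sample size $N(T)$: one must verify sufficient uniformity in $n$ of the Sanov estimates so that the conditional LDP combines cleanly with the outer Poisson Cramér LDP into a single joint LDP at speed $T$, a step that is standard but requires care in gluing the upper and lower bounds.
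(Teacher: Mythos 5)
Your overall strategy coincides with the paper's: combine Cram\'er's theorem for $N(T)/T$, Sanov's theorem for the reset durations, and a Cram\'er-type input for the per-excursion contributions, then contract onto $T^{-1}F_T$. Your middle step is a legitimate variant rather than a different proof: where the paper conditions on the durations and applies Cram\'er to the excursions of each length separately (cost $K_t$ per excursion of length $t$), you apply Sanov to the pair empirical measure and recover $K_t(w(t))$ from the entropy chain rule together with the standard identity $\inf\{h(\pi\mid\rho_t)\colon \int x\,\pi(\ddd x)=w(t)\}=K_t(w(t))$ (valid up to the usual boundary caveats). At the informal level at which both arguments are pitched, these routes are equivalent, and your list of technical caveats (topology for the unbounded observable, random sample size) matches what the paper leaves implicit.

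The genuine gap is your treatment of the boundary term $R_T$. The claim that $T^{-1}R_T$ is superexponentially small at speed $T$ is false for precisely the unbounded functionals the theorem is meant to cover. Take $f(x)=|x|$ (the absolute area): the probability that no reset occurs during the final stretch of length $\ell$ is $\eee^{-r\ell}$, and on that event the reset-free functional over that stretch is of order $\ell^{3/2}$; choosing $\ell\sim(\delta T)^{2/3}$ gives $\PP_{r}(T^{-1}R_T>\delta)\geq c\,\eee^{-r(\delta T)^{2/3}}$ for large $T$, i.e.\ the tail decays subexponentially in $T$, so exponential equivalence with $\sum_i F_{\tau_i,i}$ fails, and neither \eqref{eq:assexist} nor the exponential tail of the age $T-\sigma_{N(T)}$ can rescue it. This is not cosmetic: a single long reset-free stretch is exactly the mechanism that makes $\chi_r$ flat above the mean (Theorem~\ref{thm:genrateprop} and Sec.~\ref{sec:absolutearea}), so the final incomplete period participates in the very large-deviation events you are quantifying, and simply dropping it could in principle alter the rate function. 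To close the gap you must keep $R_T$ and show it does not lower the infimum in \eqref{eq:VarRate} --- for instance by treating the final incomplete period as one additional duration of length $\ell$, whose probability cost $\eee^{-r\ell}$ is comparable to that of a completed period of the same length and which therefore can be absorbed into $\mu$ as an atom of mass $O(1/N(T))$ at negligible entropy cost --- or prove the upper and lower LDP bounds with $R_T$ included; discarding it by exponential equivalence is a step that fails.
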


\begin{proof}
The LDP for $(T^{-1}F_T)_{T>0}$ follows by combining the LDPs for the constituent processes 
and using the contraction principle \cite[Chapter III]{H00}. The argument that follows is 
informal. However, the technical details are standard and are easy to fill in. 

First, recall that $N(T)$ is the number of reset events in the time interval $[0,T]$. By Cram\'er's 
Theorem \cite[Chapter I]{H00}, $(T^{-1}N(T))_{T>0}$ satisfies the LDP on $[0,\infty)$ with speed 
$T$ and with rate function $I_{r}$ in \eqref{ICramer}, because resetting occurs according to a 
Poisson process with intensity $r$. This rate function has a unique zero at $n=r$ and takes the 
value $r$ at $n=0$. 

Next, consider the empirical distribution of the reset periods,
\begin{equation}
\mathcal{L}_{m} = \frac{1}{m}\sum_{i=1}^{m} \delta_{\tau_{i}}.
\end{equation}
By Sanov's Theorem \cite[Chapter II]{H00}, $(\mathcal{L}_{m})_{m\in\mathbb{N}}$ satisfies the LDP 
on $\mathcal{P}([0,\infty))$, the space of probability distributions on $[0,\infty)$, with speed $m$ 
and with rate function $J_{r}$ in \eqref{JSanov}. This rate function has a unique zero at $\mu
=\mathcal{E}_r$. 

Finally, consider the empirical average of $N$ independent trials 
$\{F_{\tau,i}\}_{i=1}^N$ of the reset-free process of length $\tau$, 
\begin{equation}
m_N  = \frac{1}{N} \sum_{i=1}^{N} F_{\tau,i}.
\end{equation}
By Cram\'er's Theorem, $(m_N)_{N\in\mathbb{N}}$ satisfies the LDP on $[0, \infty)$ with 
speed $N$ and with rate function $K_{\tau}$ in \eqref{KCramer}. This rate function has a 
unique zero at $u=\EE_0(F_\tau)$.

Now, the probability that $nt\,\mu(\ddd \tau)$ excursion times of length $\tau$ contribute 
an amount $u\,nt\,\mu(\ddd \tau)$ to the integral equals
\begin{equation}
\eee^{-nt\,\mu(\ddd\tau)\,K_{\tau}(u) + o(nt)}
\end{equation}
for any $u \in \R$. If we condition on $N(T) = nT$ and $\mathcal{L}_{N(T)} = \mu$, and pick 
$w\in\mathcal{B}([0,\infty);\R)$, then the probability that $nT$ duration times contribute an 
amount $\phi\,nT$ to the integral, with
\begin{equation}
\phi = n \int_{0}^{\infty} \mu(\ddd t)\,w(t),
\end{equation}
equals
\begin{equation}
\eee^{-nT\int_{0}^{\infty}\mu(\ddd t)K_{t}(w(t)) + o(nT)}.
\end{equation}
Therefore, by the contraction principle \cite[Chapter III]{H00}, 
\begin{equation}
\frac{\PP_{r}(T^{-1}F_{T} \in \ddd \phi)}{\ddd \phi} = \mathrm{e}^{-T \chi_r(\phi)+o(T)},
\end{equation}
where $\chi_r(\phi)$ is given the variational formula in \eqref{eq:VarRate}. 
\end{proof}

\begin{remark}
{\rm A priori, Theorem~\ref{thm:var} is to be read as a \emph{weak} LDP: the level sets of 
$\chi_r$ need not be compact, e.g.\ it is possible that $\chi_r \equiv 0$. Under additional 
assumptions, $\chi_r$ has compact level sets, in which case Theorem~\ref{thm:var} can 
be read as a \emph{strong} LDP. See Appendix~\ref{app:LDP} for more details.}  
\end{remark}

We will see that the three functionals in \eqref{eq:ABC} have rate functions of different type,
namely, $\chi_r$ is:
\begin{itemize}
\item[$A_T$:] zero at $\tfrac12$, strictly positive and finite on $[0,1]\setminus\{\tfrac12\}$,
infinite on $\R\setminus [0,1]$ (strong LDP).
\item[$B_T$:] zero on $\R$ (weak LDP).
\item[$C_T$:] zero on $[1/\sqrt{2r},\infty)$, strictly positive and finite on $(0,1/\sqrt{2r})$,
infinite on $(-\infty,0]$ (strong LDP).
\end{itemize}


\section{Two properties of the rate function}
\label{sec:genratepropproof}

The variational formula in (\ref{eq:VarRate}) can be used to derive some general properties of 
the rate function with resetting. In this section, we show that the rate function is flat beyond 
the mean with resetting \emph{provided the mean without resetting diverges}, and is quadratic 
below and near the mean with resetting. Both properties will be illustrated in Sec.~\ref{sec:absolutearea} 
for the absolute area of rBM. 

\subsection{Zero rate function above the mean}
\label{sec:rfflat}

For the following theorem, we define
\begin{equation}
\phi^{*}_r = \lim_{T\to\infty} \mathbb{E}_r[T^{-1}F_T], \qquad r \geq 0.
\end{equation}
Moreover, we must assume that $f\geq 0$ in \eqref{eq:FTdef}, and that there exists a $C \in (0,\infty)$ 
such that 
\begin{align}
\label{eq:keyassumption}
\mathbb{E}[f(W_{t})^{2}] \leq C\,\mathbb{E}[f(W_{t})]^{2}\quad \forall t\geq 0. 
\end{align}

\begin{remark}
Assumption \eqref{eq:keyassumption} holds for $f(x) = |x|^{\gamma}$, $x \in \R$, and 
any $\gamma \in [0,\infty)$, and for $f(x)=1_{[0,\infty)}(x)$, $x \in \R$.
\end{remark}

\begin{theorem}
\label{thm:genrateprop}
Suppose that $f$ satisfies \eqref{eq:keyassumption} and that $\phi^{*}_0 = \infty$. 
For every $r>0$, if $\phi^{*}_r < \infty$, then
\begin{equation}
\chi_r(\phi) = 0 \quad \forall\, \phi \geq \phi^{*}_r. 
\end{equation}
\end{theorem}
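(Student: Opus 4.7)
The plan is to prove $\chi_r(\phi) = 0$ for every $\phi \geq \phi^*_r$ by exhibiting, for each $\varepsilon > 0$, an event of subexponential probability on which $F_T/T$ falls within $\varepsilon$ of $\phi$, and then invoking lower semicontinuity of $\chi_r$. The case $\phi = \phi^*_r$ is automatic from the law of large numbers for rBM, so I focus on $\phi > \phi^*_r$.

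Since $\phi^*_0 = \infty$, the ratio $\bar m(s) := \EE_0[F_s]/s$ tends to $\infty$ as $s \to \infty$. I pick sequences $T_0 = T_0(T), k = k(T) \to \infty$ satisfying $k T_0 \bar m(T_0) = (\phi - \phi^*_r) T$, with $T_0$ growing slowly enough that $T_0 \bar m(T_0) = o(T)$ (forcing $k \to \infty$) and hence $k T_0 = (\phi - \phi^*_r)T/\bar m(T_0) = o(T)$. Both requirements are simultaneously achievable precisely because $\bar m \to \infty$. Let $E$ be the event that the first $k$ inter-reset times $\tau_0, \ldots, \tau_{k-1}$ each lie in $[T_0, T_0+1]$; by independence of the $\tau_i \sim \mathcal{E}_r$, $\PP_r(E) = (e^{-rT_0}(1-e^{-r}))^k \geq \exp(-rkT_0 + O(k)) = e^{-o(T)}$.

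On $E$, the rBM trajectory in $[0, kT_0]$ decomposes into $k$ independent reset-free BM pieces of lengths in $[T_0, T_0+1]$, each contributing an independent copy $F^{(i)}_{\tau_i}$ of the reset-free functional. From \eqref{eq:keyassumption} and Cauchy-Schwarz one obtains $\EE_0[F_t^2] \leq C\,\EE_0[F_t]^2$ uniformly in $t$, so Chebyshev yields $\sum_{i=0}^{k-1} F^{(i)}_{\tau_i} = k\,\EE_0[F_{T_0}](1 + o_\PP(1)) = (\phi - \phi^*_r) T (1 + o_\PP(1))$ as $k \to \infty$. After the $k$-th reset the strong Markov property gives a fresh rBM of duration $T - kT_0 + O(k)$; by ergodicity (stationary density $\rho(x) = \sqrt{r/2}\,e^{-\sqrt{2r}|x|}$) together with $kT_0 = o(T)$, we obtain $(F_T - F_{kT_0})/T \to \phi^*_r$ in probability. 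Adding the two contributions gives $F_T/T \to \phi$ in probability on $E$, so for any $\varepsilon > 0$ and $T$ large, $\PP_r(|F_T/T - \phi| \leq \varepsilon) \geq \tfrac12\,\PP_r(E) \geq e^{-o(T)}$.

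The LDP upper bound from Theorem~\ref{thm:var} applied to the closed interval $[\phi-\varepsilon, \phi+\varepsilon]$ now forces $\inf_{\psi \in [\phi-\varepsilon, \phi+\varepsilon]} \chi_r(\psi) = 0$, and by lower semicontinuity of $\chi_r$ and compactness this infimum is attained at some $\phi_\varepsilon$ in the interval. Letting $\varepsilon \downarrow 0$ we have $\phi_\varepsilon \to \phi$, and another application of LSC gives $\chi_r(\phi) \leq \liminf_\varepsilon \chi_r(\phi_\varepsilon) = 0$, so $\chi_r(\phi) = 0$. The main obstacle lies in the simultaneous scale balance: keeping $kT_0 = o(T)$ (so that $\PP_r(E) \geq e^{-o(T)}$) while also keeping $T_0 \bar m(T_0) = o(T)$ (so that $k \to \infty$ and Chebyshev concentrates the sum); both conditions can be met only because $\bar m(T_0) \to \infty$, which is exactly the content of $\phi^*_0 = \infty$.
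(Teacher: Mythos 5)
Your argument is correct at the same level of informality as the paper's, but it follows a genuinely different route. The paper works entirely inside the variational formula \eqref{eq:VarRate}: it solves the constrained problem by Lagrange multipliers, assumes a (possibly asymptotic) distributional scaling $F_T \eqdist T^{\alpha}F_1$ or $F_T \eqdist TL(T)F_1$, and then perturbs the exponential law $\mathcal{E}_r$ by moving a vanishing mass $\epsilon_m$ to a diverging time $\theta_m$ with $\epsilon_m\theta_m \to 0$, showing the relative-entropy cost $J_r$ vanishes while the constraint $r u^*_r \epsilon_m \theta_m^\alpha = \phi-\phi^*_r$ is met; the role of \eqref{eq:keyassumption} is routed through Lemma~\ref{lemma:keyassumption} (Paley--Zygmund) to justify the superlinear growth. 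You instead realize the same mechanism pathwise: an event $E$ with $k=o(T)$ anomalously long reset periods of length about $T_0$, of probability $\eee^{-rkT_0+O(k)}=\eee^{-o(T)}$, supplies the excess $(\phi-\phi^*_r)T$ because $\bar m(T_0)=\EE_0[F_{T_0}]/T_0\to\infty$ (which is literally $\phi^*_0=\infty$, so you avoid the scaling hypothesis and Lemma~\ref{lemma:keyassumption} altogether), while \eqref{eq:keyassumption} is used only through a Chebyshev bound to concentrate the $k$ independent contributions; you then convert the subexponential lower bound into $\chi_r(\phi)=0$ via the LDP upper bound of Theorem~\ref{thm:var} on the compact interval $[\phi-\varepsilon,\phi+\varepsilon]$ (valid even for the weak LDP) together with lower semicontinuity. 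What your approach buys is elementarity and robustness: it needs only the upper bound and LSC of $\chi_r$, not the detailed structure of the variational formula, and it replaces the self-similarity assumption by the mean divergence itself; what the paper's approach buys is that the optimal perturbation is exhibited directly in terms of the ingredients $I_r,J_r,K_t$, making the ``mass into the tail of $\mu$'' interpretation explicit. One small caveat, comparable to the informal steps the paper itself acknowledges: your identification $\sum_i F^{(i)}_{\tau_i}=k\,\EE_0[F_{T_0}](1+o_{\PP}(1))$ implicitly uses $\EE_0[F_{T_0+1}]/\EE_0[F_{T_0}]\to 1$ (the conditioned means lie between $\EE_0[F_{T_0}]$ and $\EE_0[F_{T_0+1}]$), which holds in the polynomially/regularly varying regimes covered by \eqref{eqdis1}--\eqref{eqdis2} but is not a literal consequence of \eqref{eq:keyassumption} and $\phi^*_0=\infty$ alone; shrinking the window $[T_0,T_0+1]$ or noting that an overshoot by a factor tending to $1$ suffices would tighten this point.
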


In order to prove the theorem we need the following.

\begin{lemma}
\label{lemma:keyassumption}
If \eqref{eq:keyassumption} holds, then the following \emph{zero-one law} applies:
\begin{align}
\mathbb{P}\Big(\lim_{T\rightarrow\infty}T^{-1}F_{T} = \infty\Big) = 1
\quad \Longleftrightarrow \quad \phi^{*}_0 = \infty.
\end{align}
\end{lemma}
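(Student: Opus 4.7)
The plan is to treat the two implications separately. Throughout, let $E := \{\lim_{T\to\infty} T^{-1}F_T = \infty\}$. Observe first that $E$ lies in the tail $\sigma$-algebra of the driving Brownian motion: for any fixed $N$, $F_T = F_N + \int_N^T f(W_t)\,\ddd t$ and $F_N/T \to 0$, so the asymptotics of $T^{-1}F_T$ depend only on $(W_t)_{t\geq N}$, hence $E\in\bigcap_N\sigma(W_t:t\geq N)$. Kolmogorov's zero--one law then gives $\PP(E)\in\{0,1\}$; the same reasoning applies to related events such as $\{\limsup_T T^{-1}F_T \geq M\}$.

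The direction $(\Rightarrow)$ is immediate from Fatou's lemma applied to $T^{-1}F_T\geq 0$: $\phi^*_0 = \lim_T \EE[T^{-1}F_T] \geq \EE[\liminf_T T^{-1}F_T] = \infty$. For $(\Leftarrow)$, assume $\phi^*_0 = \infty$. The first step is to use \eqref{eq:keyassumption} together with Cauchy--Schwarz on the covariance to obtain
\begin{equation*}
\EE[F_T^2] = \int_0^T\!\!\int_0^T \EE[f(W_s) f(W_t)]\,\ddd s\,\ddd t \leq \int_0^T\!\!\int_0^T \sqrt{\EE[f(W_s)^2]\,\EE[f(W_t)^2]}\,\ddd s\,\ddd t \leq C(\EE F_T)^2.
\end{equation*}
The Paley--Zygmund inequality then yields $\PP(F_T \geq \tfrac{1}{2}\EE F_T) \geq 1/(4C)$, and since $\EE F_T/T \to \infty$ this gives $\PP(T^{-1}F_T \geq M) \geq 1/(4C)$ for every fixed $M$ and all $T$ sufficiently large. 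Fatou's lemma for sets, together with the zero--one law applied to the tail event $\{\limsup_T T^{-1}F_T \geq M\}$, then forces $\PP(\limsup_T T^{-1}F_T = \infty) = 1$.

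The main obstacle is to upgrade this to $\PP(\lim_T T^{-1}F_T = \infty) = 1$, i.e.\ to rule out a random subsequence along which $T^{-1}F_T$ stays bounded. Because $f\geq 0$, the map $T\mapsto F_T$ is nondecreasing, and a dyadic reduction helps: for $T\in[2^n,2^{n+1}]$, $F_T/T \geq F_{2^n}/T \geq \tfrac12 F_{2^n}/2^n$, so it suffices to prove $F_{2^n}/2^n \to \infty$ almost surely along the dyadic sequence. I would establish this by invoking Brownian self-similarity through the Lamperti representation $W_t = \sqrt{t}\,Y_{\log t}$ with $Y$ a stationary ergodic Ornstein--Uhlenbeck process; in this representation $F_{2^n}/2^n$ becomes an exponentially weighted integral of a stationary ergodic functional over a shifting window, and ergodicity of $Y$ should then coerce $\liminf$ and $\limsup$ of the discrete subsequence to coincide. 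The bulk of the technical work lies in this last ergodic step --- moving beyond the second-moment control to genuine almost-sure divergence --- since Paley--Zygmund alone delivers no information about the $\liminf$.
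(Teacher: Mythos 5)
Your main line of argument --- tail triviality of the Brownian filtration for the zero--one law, Cauchy--Schwarz combined with \eqref{eq:keyassumption} to get $\EE[F_T^2]\le C\,(\EE[F_T])^2$, and Paley--Zygmund to get a $T$-uniform positive lower bound on $\PP(T^{-1}F_T\ge M)$ --- is exactly the paper's proof, and your Fatou argument for the forward implication (left implicit in the paper) is fine. You are also correct that, taken literally, this only shows that the event $\{\limsup_{T\to\infty}T^{-1}F_T=\infty\}$ has probability one; the paper passes from the Paley--Zygmund bound directly to the $\lim$ event and invokes the zero--one law without further comment, so the upgrade you single out is precisely the point the paper treats informally.

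The genuine gap in your proposal is the way you intend to close that upgrade. The dyadic reduction is fine (it uses $f\ge 0$), but the Lamperti/ergodic step would fail. First, $f(\sqrt{t}\,Y_{\log t})$ is a stationary functional of the Ornstein--Uhlenbeck process only when $f$ is homogeneous, which \eqref{eq:keyassumption} does not grant. Second, and more importantly, even for $f(x)=|x|$ the substitution turns the correctly normalised quantity $F_{2^n}/2^{3n/2}$ into an exponentially weighted integral whose mass is concentrated in a window of bounded length at the moving endpoint $u=n\log 2$; this is not a Birkhoff average, so ergodicity of $Y$ gives no almost-sure convergence. In fact that normalised sequence provably does not converge: since $F_T\eqdist T^{3/2}F_1$ and $\PP(F_1<\epsilon)>0$, $\PP(F_1>M)>0$ for all $\epsilon,M$, tail triviality forces $\liminf_n F_{2^n}/2^{3n/2}=0$ and $\limsup_n F_{2^n}/2^{3n/2}=\infty$ almost surely, so no argument can make its liminf and limsup coincide. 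What you actually need is weaker but of a different nature: for every $M$, $F_{2^n}\ge M\,2^n$ eventually, which calls for lower-tail (small-ball) control of the type $\PP(F_T\le MT)\le \eee^{-c_M T}$ under the scalings \eqref{eqdis1}--\eqref{eqdis2}, followed by Borel--Cantelli along dyadic times; second moments and Paley--Zygmund cannot produce such a bound. At the paper's level of rigour this final step is simply asserted, but as a stand-alone rigorous proof your last step is where the argument breaks down.
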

\begin{proof}
Because $(W_{t})_{t\geq 0}$ has a \emph{trivial} tail sigma-field, we have 
\begin{align}
\mathbb{P}\Big(\lim_{T\rightarrow \infty}T^{-1}F_{T}=\infty\Big) \in \{0, 1\}. 
\end{align}
It suffices to exclude that the probability is 0. First note that \eqref{eq:keyassumption} implies
\begin{align}
\label{eq:keyassumptionimplication}
\mathbb{E}[(T^{-1}F_{T})^{2}] \leq C\,\mathbb{E}[T^{-1}F_{T}]^{2} \qquad \forall\,T > 0. 
\end{align}
Indeed, 
\begin{align}
T^{2}\mathbb{E}[(T^{-1}F_{T})^{2}] 
&= \int_{0}^{T} \ddd s \int_{0}^{T} \ddd t\,\,\mathbb{E}[f(W_{s})f(W_{t})]\nonumber\\
&\leq \int_{0}^{T} \ddd s \int_{0}^{T} \ddd t\,\,\sqrt{\mathbb{E}[f(W_{s})^{2}]\,\mathbb{E}[f(W_{t})^{2}]}\nonumber\\
&\leq C \int_{0}^{T}\ddd s \int_{0}^{T}\ddd t\,\,\mathbb{E}[f(W_{s})]\,\mathbb{E}[f(W_{t})]\nonumber\\
&= C\,T^{2}\,\mathbb{E}[T^{-1}F_{T}]^{2},
\end{align}
where the first inequality uses Cauchy--Schwarz and the second inequality uses \eqref{eq:keyassumption}. 
Armed with \eqref{eq:keyassumptionimplication}, we can use the Paley--Zygmund inequality 
\begin{align}
\mathbb{P}(T^{-1}F_{T}\geq \delta\mathbb{E}[T^{-1}F_{T}]) \geq (1-\delta)^{2}\frac{\mathbb{E}
[T^{-1}F_{T}]^{2}}{\mathbb{E}[(T^{-1}F_{T})^{2}]} \qquad \forall\,\delta \in (0, 1)\, \forall\,T>0,
\end{align}
to obtain
\begin{align}
\mathbb{P}\Big(\frac{T^{-1}F_{T}}{\mathbb{E}[T^{-1}F_{T}]}\geq \delta\Big)\geq (1-\delta)^{2}
\frac{1}{C} \qquad \forall\, \delta \in (0, 1)\,\forall\, T>0.
\end{align}
Hence if $\lim_{T\rightarrow\infty}\mathbb{E}[T^{-1}F_{T}] = \infty$, then
\begin{align}
\mathbb{P}\Big(\lim_{T\rightarrow\infty}T^{-1}F_{T}=\infty\Big)\geq (1-\delta)^{2}\frac{1}{C}>0
\qquad \forall\, \delta \in (0,1),
\end{align}
which completes the proof.
\end{proof}

We now turn to proving Theorem~\ref{thm:genrateprop}. Again, the argument that follows is
informal, but the technical details are standard.

\begin{proof}[Proof of Theorem~\ref{thm:genrateprop}]
The variational formula for the rate function in \eqref{eq:VarRate} is a constrained functional 
optimization problem that can be solved using the method 
of Lagrange multipliers. For fixed $n$ and $\mu$, the Lagrangian reads
\begin{equation}
\mathcal{L}(w(\cdot)) =  I_{r}(n) + n J_{r}(\mu) + n \int_{0}^{\infty}\mu(\ddd t)\,K_{t}(w(t)) 
-\lambda n\int_{0}^{\infty}\mu(\ddd t)\, w(t),
\end{equation}
where $\lambda$ is the Lagrange multiplier that enforces the constraint 
\begin{align}
\label{eq:conK}
n \int_{0}^{\infty}\mu(\ddd t) w(t) = \phi. 
\end{align} 
We look for solutions $w_{\lambda}(\cdot)$ of the equation $\frac{\partial \mathcal{L}}{\partial w(t)}
(\cdot)=0$ for all $t \geq 0$, i.e.,
\begin{align}
\label{eq:Kprime}
K_{t}'(w_{\lambda}(t)) = \lambda, \qquad t \geq 0, 
\end{align} 
where $w_{\lambda}(\cdot)$ must satisfy the constraint $n \int_{0}^{\infty} \mu(\ddd t)\,w_\lambda(t) 
= \phi$. To that end, let $L_{t}(\cdot)$ be the inverse of $K_{t}'(\cdot)$, i.e.,
\begin{equation}
\label{eq:Ldef}
K_{t}'(L_t(\lambda)) = \lambda, \qquad \lambda \in \R,\,t>0.
\end{equation}
Then \eqref{eq:Kprime} becomes
\begin{equation}
w_{\lambda}(t) = L_{t}(\lambda), \qquad t \geq 0,
\end{equation} 
and so
\begin{equation}
\label{eq:rfreform}
\chi_{r}(\phi) = \inf_{n \in [0,\infty),\, \mu \in \mathcal{P}([0,\infty))} \Big\{I_{r}(n) + nJ_{r}(\mu) 
+ n\int_{0}^{\infty}\mu(\ddd t)\,K_{t}(L_{t}(\lambda)) \Big\},
\end{equation}
where $\lambda=\lambda(n,\mu)$ must be chosen such that
\begin{align}
\label{eq:conL}
n \int_{0}^{\infty}\mu(\ddd t)\,L_{t}(\lambda) = \phi.
\end{align}

Our task is to show that $\chi_r$ is zero on $[\phi^*_r,\infty)$ when $\phi^*_0=\infty$.
To do so, we perturb $\chi_{r}(\phi)$ around $\phi^*_r$. To see how, we first rescale time. 
The proper rescaling depends on how $F_T$ scales with $T$ without resetting. For the 
sake of exposition, we first consider the case where there exists an $\alpha \in (1,\infty)$ 
such that 
\begin{equation}
\label{eqdis1}
T^{-\alpha} F_T \eqdist F_1 \qquad \forall\,T>0,
\end{equation}
where $\eqdist$ means equality in distribution. For example, for the area and the absolute 
area we have $\alpha=\tfrac32$, while for the positive occupation time we have $\alpha=1$. 
(Note, however, that neither the area nor the positive occupation time qualify for the theorem 
because $\phi^*_0=0$, respectively, $\phi^*_0=\tfrac12$.)  Afterwards we will explain how to 
deal with the general case. 

By \eqref{KCramer}, \eqref{eq:Ldef} and \eqref{eqdis1}, we have
\begin{equation}
\label{eq:scal}
K_{t}(u) = K_1(u t^{-\alpha}), \quad u \in \R, \, t > 0, \qquad L_t(\lambda) = L_1(\lambda t^\alpha)\,t^\alpha,
\quad \lambda \in \R,\,t>0.
\end{equation}
The rescaling in \eqref{eq:scal} changes the integral in \eqref{eq:rfreform} to
\begin{equation}
n \int_{0}^{\infty} \mu(\ddd t)\,K_1(L_1(\lambda t^\alpha))
\end{equation}
and the constraint in \eqref{eq:conL} to
\begin{equation}
n \int_{0}^{\infty}\mu(\ddd t)\,L_1(\lambda t^\alpha)\,t^{\alpha} = \phi.
\end{equation}

We claim that, for every $n \in (0,\infty)$, we can find a minimising sequence of probability distributions 
$(\mu_{m})_{m\in\N}$ (depending on $n$) such that $\lambda=\lambda(n,\mu_{m})=0$ for all 
$m\in\N$ and such that $\mu_{m}$ converges as $m\to\infty$ to $\mathcal{E}_r$ pointwise 
and in the $L^{1}$-norm, but not in the $L^{\alpha}$-norm. We will show that this implies that 
$\chi_r(\phi)=0$ for $\phi >\phi^{*}_r$. We will construct the sequence $(\mu_{m})_{m\in\N}$ 
by perturbing $\mathcal{E}_r$ slightly, adding a small probability mass near some large time 
and taking the same probability mass away near time $0$. 

\begin{figure}[htbp]
\begin{center}
\includegraphics{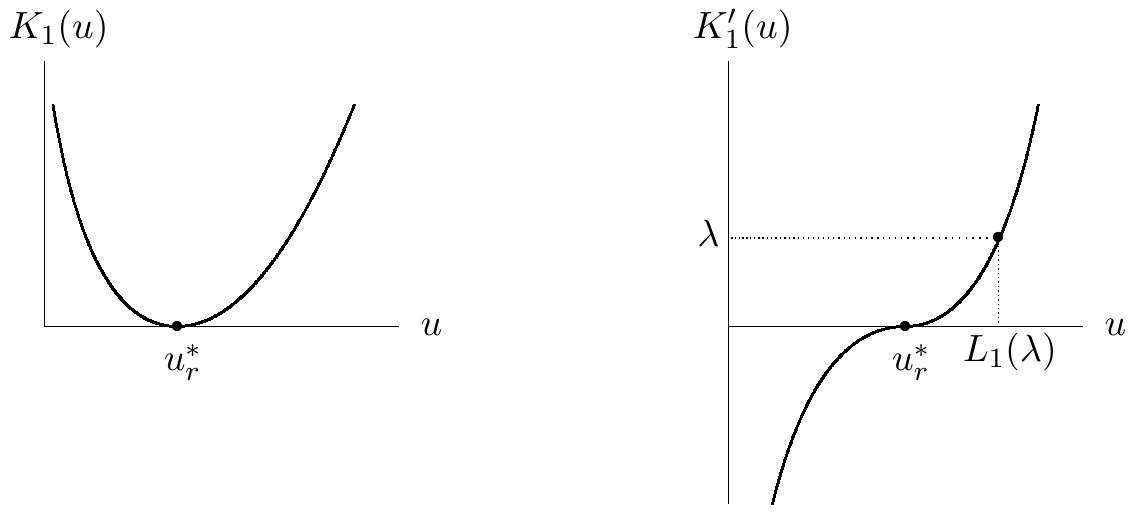}
\caption{Qualitative plot of $u \mapsto K_1(u)$ and $u \mapsto K'_1(u)$ on $\R$. The domain of $K_1$ is a subset 
of $\R$. In the interior of this domain, $K_1$ is smooth and strictly convex.}
\label{fig:KL}
\end{center}
\end{figure}

Let $u_r^{*}$ be such that $K_1(u_r^*) = 0$, i.e.,
\begin{equation}
\label{intc1}
r \int_{0}^{\infty} \mathcal{E}_r(\ddd t)\,u_r^{*}\,t^{\alpha} = \phi^*_r
\end{equation}
(see Fig.~\ref{fig:KL}; recall that $\mathcal{E}_r(\ddd t) = r\mathrm{e}^{-rt}\,\ddd t$). Since $u_r^* = L_1(0)$, 
if we require the probability distribution $\mu$ over which we minimise to satisfy
\begin{equation}
\label{intc2}
n \int_{0}^{\infty} \mu(\ddd t)\,u_r^{*}\,t^{\alpha} = \phi, 
\end{equation}
then the scaled version of the optimisation problem in \eqref{eq:rfreform} reduces to 
\begin{equation}
\label{inf}
\inf_{n \in [0,\infty)} \Big\{ I_{r}(n) + n \inf_{\mu \in \mathcal{P}([0,\infty))} J_{r}(\mu) \Big\}. 
\end{equation}
Our goal is to prove that this infimum is zero for all $\phi > \phi^*_r$ when $\phi^*_0=\infty$.

We get an upper bound by picking $n=r$ and
\begin{equation}
\mu_{m}(\ddd t) = \mathcal{E}_r(\ddd t) + \nu_{m}(\ddd t)
\end{equation}
with 
\begin{equation}
\nu_{m}(\ddd t) = -\epsilon_{m}\delta_{0}(\ddd t) + \epsilon_{m}\delta_{\theta_{m}}(\ddd t),
\end{equation}
where $\epsilon_{m},\theta_{m}$ will be chosen later such that $\lim_{m\to\infty} \epsilon_{m} 
= 0$ and $\lim_{m\to\infty} \theta_{m} = \infty$. Substituting this perturbation into \eqref{intc2} 
and using \eqref{intc1}, we get
\begin{equation}
\label{intc}
r u^{*}_r\,\epsilon_m (\theta_m)^{\alpha} = \phi - \phi^*_r, 
\end{equation}
which places a constraint on our choice of $\epsilon_m, \theta_m$. On the other hand, substituting 
the perturbation into the expression for $J_{r}(\mu)$, we obtain
\begin{equation}
\begin{aligned}
\label{eq:Jrewalt}
J_{r}(\mu_m) 
&= \int_{0}^{\infty} (\mathcal{E}_r - \epsilon_{m} \delta_{0} + \epsilon_{m}\delta_{\theta})(\ddd t)\, 
\log\Big(\frac{\mathcal{E}_r - \epsilon_{m} \delta_{0} 
+ \epsilon_{m}\delta_{\theta}}{\mathcal{E}_r}\Big)(t)\\
&= \int_{0}^{\infty} \mathcal{E}_r(\ddd t)\, \log\Big(1 + \frac{- \epsilon_{m} \delta_{0} 
+ \epsilon_{m}\delta_{\theta}}{\mathcal{E}_r}\Big)(t)\\
&\qquad - \epsilon_{m} \int_{0}^{\infty} \delta_{0}(\ddd t)\, 
\log\Big(1 + \frac{- \epsilon_{m} \delta_{0} 
+ \epsilon_{m}\delta_{\theta}}{\mathcal{E}_r}\Big)(t)\\
&\qquad + \epsilon_{m} \int_{0}^{\infty} \delta_{\theta_{m}}(\ddd t)\, 
\log\Big(1 + \frac{- \epsilon_{m} \delta_{0} 
+ \epsilon_{m}\delta_{\theta}}{\mathcal{E}_r}\Big)(t).
\end{aligned}
\end{equation}
For a proper computation, $\delta_0$ and $\delta_\theta$ must be approximated by $\eta^{-1}\,1_{[0,\eta]}$ 
and $\eta^{-1}\,1_{[\theta,\theta+\eta]}$, followed by $\eta \downarrow 0$. Doing so, after we perform the 
integrals, we see that the three terms in the right-hand side of \eqref{eq:Jrewalt} become
\begin{equation}
\begin{aligned}
&r\eta \log\Big( \frac{1-\epsilon_{m}/\eta}{r}\Big) 
+ r\mathrm{e}^{-r\theta_{m}}\eta 
\log\Big( 1 + \frac{\epsilon_{m}/\eta}{r\mathrm{e}^{-r\theta_{m}}}\Big),\\
&-\epsilon_{m}\log\Big(1 -\frac{\epsilon_{m}/\eta}{r}\Big),\\
&+\epsilon_{m}\log\Big(1+\frac{\epsilon_{m}/\eta}{r\mathrm{e}^{-r\theta_{m}}}\Big).
\end{aligned}
\end{equation}
For all of these terms to vanish as $m \to \infty$ followed by $\eta \downarrow 0$, it suffices to pick 
$\epsilon_{m}$ and $\theta_{m}$ such that $\lim_{m\to\infty} \epsilon_{m} = 0$, $\lim_{m\to\infty} 
\theta_{m} = \infty$ and $\lim_{m\to \infty} \theta_{m}\epsilon_{m} = 0$. Clearly, this can be done
while matching the constraint in \eqref{intc} for any $\phi>\phi^*_r$, because $\alpha \in (1,\infty)$, 
and so we conclude that indeed the infimum in \eqref{inf} is zero. 

It is easy to check that the same argument works when, instead of \eqref{eqdis1}, there exists
a $T \mapsto L(T)$ with $\lim_{T \to \infty} L(T) = \infty$ such that
\begin{equation}
\label{eqdis2}
(TL(T))^{-1} F_T \eqdist F_1 \qquad \forall\,T>0.
\end{equation}
Indeed, then the constraint in \eqref{intc1} becomes $r u^{*}_r\,\epsilon_m \theta_m L(\theta_m) 
= \phi - \phi^*_r$, which can be matched too. It is also not necessary that the scaling in
\eqref{eqdis1} and \eqref{eqdis2} hold for all $T>0$. It clearly suffices that they hold 
asymptotically as $T \to\infty$. Hence, all that is needed is that $T^{-1}F_T$ without 
resetting diverges as $T \to\infty$, which is guaranteed by Lemma~\ref{lemma:keyassumption}.
\end{proof}

The interpretation of the above approximation is as follows. The shift of a tiny amount of 
probability mass into the tail of the probability distribution $\mu$ has a negligible cost 
on the exponential scale. The shift produces a small fraction of reset periods that are 
longer than typical. In these reset periods large contributions occur at a negligible cost, 
since the growth without reset is faster than linear. In this way we can produce any $\phi$ 
that is larger than $\phi^{*}_r$ at zero cost on the scale $T$ of the LDP.

\begin{remark}
{\rm Theorem~\ref{thm:genrateprop} captures a potential property of the rate function to 
the right of the mean. A similar property holds to the left of the mean, when $\phi^*_0 = -\infty$ 
and $\phi^*_r > -\infty$ for $r>0$.} 
\end{remark}


\subsection{Quadratic rate function below the mean}
\label{sec:rfquad}

\begin{theorem}
Suppose that $\phi^{*}_0 = \infty$. For every $r>0$, if $\phi^{*}_r < \infty$, then
\begin{equation}
\chi_r(\phi) \sim C_r(\phi^*_r-\phi)^2, \qquad \phi \uparrow \phi^*_r,
\end{equation}
with $C_r\in (0,\infty)$ a constant that is given by the variational formula in
\eqref{quadratic2}--\eqref{quadratic3} below. (The symbol $\sim$ means 
that the quotient of the left-hand side and the right-hand side tends to $1$.)
\end{theorem}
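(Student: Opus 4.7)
The plan is to perform a local quadratic expansion of the Lagrangian variational formula \eqref{eq:rfreform} around the zero $(n,\mu,\lambda)=(r,\mathcal{E}_r,0)$ of $\chi_r$, at which $\phi=\phi_r^*$. Since $\phi_r^*<\infty$ and $f\ge 0$, the tail-shifting mechanism that produced flatness above the mean in Theorem~\ref{thm:genrateprop} can only \emph{increase} $\phi$; hence a deviation $\phi-\phi_r^*<0$ must be realised perturbatively, by simultaneously tweaking the reset rate, the empirical distribution of reset periods, and the reset-free trajectory.

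The first step is to record the second-order Taylor expansions at the relevant zeros,
\begin{align*}
I_r(r+\delta n) &= \tfrac{1}{2r}(\delta n)^2 + o\big((\delta n)^2\big),\\
J_r\big(\mathcal{E}_r(1+h)\big) &= \tfrac{1}{2}\!\int_0^\infty h(t)^2\,\mathcal{E}_r(\ddd t) + o(\|h\|^2),\quad \int h\,\ddd\mathcal{E}_r=0,\\
K_t(L_t(\lambda)) &= \tfrac{1}{2}\sigma_t^2\,\lambda^2 + O(\lambda^3),\qquad L_t(\lambda) = u_t^* + \sigma_t^2\,\lambda + O(\lambda^2),
\end{align*}
where $u_t^*=\mathbb{E}_0[F_t]$ and $\sigma_t^2=\mathbb{E}_0[F_t^2]-(u_t^*)^2$; the last line follows from Legendre duality applied to $M_t$ at $v=0$ via the identity $K_t''(u_t^*)=1/M_t''(0)=1/\sigma_t^2$, and the $J_r$-expansion uses $(1+h)\log(1+h)=h+\tfrac12 h^2+O(h^3)$ together with $\int h\,\ddd\mathcal{E}_r=0$. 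Linearising the value constraint \eqref{eq:conL} around the minimiser then yields the single affine relation
\begin{equation*}
\phi-\phi_r^*\;=\;\bar F\,\delta n\;+\;r\!\int_0^\infty u_t^*\,h(t)\,\mathcal{E}_r(\ddd t)\;+\;r\bar V\,\lambda,
\end{equation*}
with $\bar F=\int\mathcal{E}_r(\ddd t)\,u_t^*=\phi_r^*/r$ and $\bar V=\int\mathcal{E}_r(\ddd t)\,\sigma_t^2$, subject to the normalisation $\int h\,\ddd\mathcal{E}_r=0$.

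The problem now reduces to a positive-definite constrained quadratic program in $(\delta n,h,\lambda)$ with a single affine value constraint of order $\phi-\phi_r^*$. I would introduce two Lagrange multipliers (one for the value constraint and one for the normalisation of $h$), solve the resulting linear system to express the optimiser linearly in $\phi-\phi_r^*$, and substitute back to obtain $\chi_r(\phi)\sim C_r(\phi_r^*-\phi)^2$ with $C_r>0$ the reciprocal of the contracted quadratic form; a short calculation gives the clean identification $C_r^{-1}=2r\!\int_0^\infty\mathcal{E}_r(\ddd t)\,\mathbb{E}_0[F_t^2]$, which produces the variational expression the authors will display in \eqref{quadratic2}--\eqref{quadratic3}.

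The main obstacle is to show that the quadratic ansatz is tight, i.e.\ that as $\phi\uparrow\phi_r^*$ the infimum of the full nonlinear problem \eqref{eq:rfreform} is attained along a sequence lying in a shrinking neighbourhood of $(r,\mathcal{E}_r,0)$. The ansatz itself immediately yields the upper bound $\chi_r(\phi)\le C_r(\phi_r^*-\phi)^2(1+o(1))$. The matching lower bound requires ruling out non-perturbative competitors: because the tail-shifting configurations of Theorem~\ref{thm:genrateprop} rely on $f\ge 0$ and can only push $\phi$ \emph{above} $\phi_r^*$, any competing configuration for $\phi<\phi_r^*$ is bounded away from the singular regime, and joint strict convexity of $I_r$, $J_r$, and $\mu\mapsto\int\mu(\ddd t)\,K_t(w(t))$, combined with a lower-semicontinuity/compactness argument, should close the gap. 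The delicate technical point is controlling the interaction between the finite-dimensional $n$-variable and the infinite-dimensional $(\mu,w)$ variables, together with the growth of $u_t^*$ and $\sigma_t^2$ as $t\to\infty$ inside the $\mathcal{E}_r$-integrals.
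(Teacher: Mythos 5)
Your overall strategy is the same as the paper's: expand the variational formula \eqref{eq:VarRate} (in its Lagrangian form \eqref{eq:rfreform}) to second order around its unique zero, using $I_r(r+\delta n)\approx(\delta n)^2/2r$, the quadratic expansion of the relative entropy $J_r$ at $\mathcal{E}_r$ (with the mass constraint $\int h\,\ddd\mathcal{E}_r=0$), and the quadratic expansion of $K_t$ at its zero, then read off $C_r$ as the value of a constrained quadratic program in $(\delta n,h,\lambda)$ with one linearised value constraint. That is precisely how the paper arrives at \eqref{quadratic1}--\eqref{quadratic3}; your affine constraint is the analogue of \eqref{quadratic3} (the paper writes it through the scaling $t^\alpha$, resp.\ $tL(t)$, rather than through general $u_t^*,\sigma_t^2$, a cosmetic difference). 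The paper, however, does \emph{not} solve the quadratic program: it leaves $C_r$ as the variational formula and only verifies $C_r>0$ (the choice $m=0$, $\nu\equiv0$, $v\equiv0$ violates the constraint) and $C_r<\infty$ (an explicit admissible choice with $\nu\equiv v\equiv0$). You should include these two checks, since the theorem asserts $C_r\in(0,\infty)$.

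The genuine flaw is your ``clean identification'' $C_r^{-1}=2r\int_0^\infty\mathcal{E}_r(\ddd t)\,\EE_0[F_t^2]$. This cannot be the value of the quadratic program. Matching the central limit scaling forces $C_r=1/(2\sigma_r^2)$ with $\sigma_r^2=\lim_{T\to\infty}T\,\Var_r[T^{-1}F_T]$, which in renewal terms is the \emph{centred} quantity $r\,\Var\!\big(F_\tau-\phi_r^*\tau\big)$ with $\tau\sim\mathcal{E}_r$; your formula uses the uncentred second moment $\EE_0[F_t^2]$ and so miscounts the compensation between the reward $F_\tau$ and the cycle length (equivalently, the $\delta n$ and $h$ directions entering the constraint). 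A concrete check against the paper's own computation for the absolute area: Section 6 gives $\lim_{T\to\infty}T\,\Var_r[T^{-1}C_T]=3/(4r^2)$, hence $C_r=2r^2/3$, whereas your formula gives $C_r^{-1}=2r\cdot\tfrac38\cdot 6/r^3=9/(2r^2)$, i.e.\ $C_r=2r^2/9$, off by a factor $3$. So either drop the closed form (the theorem only needs the variational characterisation \eqref{quadratic2}--\eqref{quadratic3} together with $0<C_r<\infty$) or redo the contraction keeping all cross terms. Your concern about the matching lower bound (ruling out non-perturbative competitors) is legitimate but not a defect relative to the paper, whose proof is explicitly informal on this point; note also that the one-sidedness you invoke is exactly the paper's remark that the expansion is only possible for $\phi<\phi_r^*$, in line with Theorem~\ref{thm:genrateprop}.
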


\begin{proof}
We perturb \eqref{eq:VarRate} around its zero by taking
\begin{equation}
\label{pert}
n= r+m\epsilon, \qquad \mu(\ddd t) = \mathcal{E}_r(\ddd t)\,[1+\nu(t)\epsilon], \qquad
w(t) = u^{*}_r + v(t)\epsilon,
\end{equation}
subject to the constraint $\int_{0}^{\infty} \mathcal{E}_r(\ddd t)\,\nu(t) = 0$, with $\nu(\cdot),
v(\cdot)$ Borel measurable, to ensure that $\mu \in \mathcal{P}([0,\infty))$. This gives
\begin{equation}
I_{r}(r+m\epsilon) = F^*_r(m)\epsilon^{2} + O(\epsilon^{3}), 
\qquad F^*_r(m)=\frac{m^{2}}{2r}.
\end{equation}
Next, we have
\begin{equation}
J_{r}(\mu) = \int_{0}^{\infty} \mathcal{E}_r(\ddd t)\,[1+\nu(t)\epsilon]\log[1+\nu(t)\epsilon].
\end{equation}
Expanding the logarithm in powers of $\epsilon$ and using the normalisation condition, 
we obtain
\begin{equation}
J_{r}(\mu) = G^*_r(\nu)\epsilon^{2} + O(\epsilon^{3}),
\qquad G^*_r(\nu) = \frac{1}{2}\int_{0}^{\infty} \mathcal{E}_r(\ddd t)\,\nu^{2}(t).
\end{equation}
Lastly, we know that (see Fig.~\ref{fig:KL})
\begin{equation}
K_{1}(u^{*}_r+v(t)\epsilon) \sim \tfrac12\,v(t)^{2}\,K''_{1}(u^{*}_r)\epsilon^{2}.
\end{equation}
(As observed below \eqref{KCramer}, $K_{1}$ is strictly convex and smooth on the interior 
of its domain.) Hence the last term in the variational formula becomes
\begin{equation}
\begin{aligned}
&(r+m\epsilon)\int_{0}^{\infty} \mathcal{E}_r(\ddd t)\,[1+\nu(t)\epsilon]\,K_{1}(u^{*}_r+v(t)\epsilon) 
= H^*_r(v)\epsilon^{2} + O(\epsilon^{3}),\\
&H^*_r(v)= \frac{r}{2}K_{1}''(u^{*}_r)\int_{0}^{\infty} \mathcal{E}_r(\ddd t)\,v(t)^{2}.
\end{aligned}
\end{equation}
It follows that
\begin{equation}
\label{quadratic1}
\chi(\phi^{*}_r+\epsilon) = C_r \epsilon^{2} + O(\epsilon^{3})
\end{equation}
with
\begin{equation}
\label{quadratic2}
C_r = \inf_{(m, \nu, v) \in \Phi} \Big\{F^*_r(m)+G^*_r(\nu)+H^*_r(v)\Big\},
\end{equation}
where 
\begin{equation}
\label{quadratic3}
\Phi = \left\{(m,\nu,v)\colon\,
\int_{0}^{\infty} \mathcal{E}_r(\ddd t)\,\nu(t) = 0, \,\, 
r \int_{0}^{\infty} \mathcal{E}_r(\ddd t)\,\left[\frac{m}{r} + \nu(t) + v(t)\right]\,t^{\alpha} = 1\right\}.
\end{equation}
The last constraint guarantees that $n\int_0^\infty \mu(\ddd t)w(t) =\phi^{*}_r+\epsilon+O(\epsilon^{2})$, 
and arises from \eqref{intc1}--\eqref{intc2} after inserting \eqref{pert} and letting $\epsilon \downarrow 0$, 
all for the special case in \eqref{eqdis1}. Finally, it is easy to check that the same argument works 
when \eqref{eqdis1} is replaced by \eqref{eqdis2}. In that case, $t^{\alpha}$ in \eqref{quadratic3} 
becomes $tL(t)$. 

Note that $F^*_r$, $G^*_r$ and $H^*_r$ need not be finite everywhere. However, for the variational formula 
in \eqref{quadratic2} clearly only their finite values matter. Also note that the perturbation is possible only 
for $\epsilon<0$ ($\phi<\phi_r^*$), since there is no minimiser to expand around for $\epsilon>0$ 
($\phi>\phi_r^{*}$), as is seen from Theorem~\ref{thm:genrateprop}.

We have $C_r>0$, because the choice $m=0$, $\nu(\cdot) \equiv 0$, $v(\cdot) \equiv 0$ does not match 
the last constraint. We also have $C_{r}<\infty$, because we can choose $m=r^\alpha/\Gamma(1+\alpha)$,
$\nu(\cdot) \equiv 0$, $v(\cdot) \equiv 0$, which gives $F^*_r(m)=r^{2\alpha-1}/2(\Gamma(1+\alpha))^2$,
$G^*_r(\nu)=0$, $H^*_r(v)=0$. 
\end{proof}

\section{Positive occupation time}
\label{sec:occtime}

We now apply the results of Sec.~\ref{sec:genratepropproof} to the three functionals 
of rBM defined in \eqref{eq:ABC}. We start with the positive occupation time, defined as
\begin{equation}
\label{eq:OT}
A_{T} = \int_{0}^{T} 1_{[0,\infty)}(W^r_{t})\, \ddd t.
\end{equation} 
This random variable has a density with respect to the Lebesgue measure, which we 
denote by $p^A_r(a)$, i.e.,
\begin{equation}
p^A_r(a) = \frac{\PP_r( A_T\in \ddd a)}{\ddd a},\qquad a\in (0,T).
\end{equation}
Without resetting, this density is 
\begin{equation}
\label{eq:occtnores}
p^A_0(a) = \frac{1}{\pi\sqrt{a(T-a)}},\qquad a\in (0,T),
\end{equation}
which is the derivative of the famous arcsine law found by L\'evy~\cite{Levy39}. The next 
theorem shows how this result is modified under resetting. 

\begin{theorem}
\label{thm:occtime}
The positive occupation time of rBM has density
\begin{equation}
\label{eq:occtimedistfinal}
p^A_r(a) = \frac{r}{T}\,\eee^{-rT}\,W\big(r\sqrt{a(T-a)}\,\big), \qquad a\in(0,T),
\end{equation}
where
\begin{equation}
\label{eq:scalingfunction}
W(x) = \frac{1}{x}\sum_{j=0}^{\infty}\frac{x^{j}}{\Gamma(\frac{j+1}{2})^2} 
= I_{0}(2x) + \frac{1}{x\pi}\,{}_{1}F_{2}\big(\{1\},\{\tfrac{1}{2}, \tfrac{1}{2}\}, x^{2}\big),
\qquad x\in (0, \infty),
\end{equation}
with $I_{0}(y)$ the modified Bessel function of the first kind with index $0$ and 
${}_{1}F_{2}(\{a\},\{b,c\},y)$ the generalized hypergeometric function 
{\rm \cite[Section 9.6, Formula 15.6.4]{Abramowitz65}}. 
\end{theorem}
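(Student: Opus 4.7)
The plan is to apply Theorem~\ref{thm:RR}, which reduces the problem to computing the Laplace transform of the reset-free generating function and inverting the result term by term.

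First, using the arcsine density \eqref{eq:occtnores} and interchanging the order of integration, I would compute
\begin{equation}
\tG_{0}(k,s) = \int_{0}^{\infty}\eee^{-sT}\int_{0}^{T}\frac{\eee^{ka}}{\pi\sqrt{a(T-a)}}\,\ddd a\,\ddd T = \frac{1}{\sqrt{s(s-k)}},\qquad s>\max(k,0),
\end{equation}
since after the substitution $u=T-a$ the inner integral reduces to $\int_{0}^{\infty}u^{-1/2}\eee^{-su}\,\ddd u=\sqrt{\pi/s}$. Theorem~\ref{thm:RR} then yields the compact expression
\begin{equation}
\tG_{r}(k,s) = \frac{1}{\sqrt{(r+s)(r+s-k)}-r}.
\end{equation}

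In the region where $r<\sqrt{(r+s)(r+s-k)}$, I would expand as a geometric series,
\begin{equation}
\tG_{r}(k,s) = \sum_{n=0}^{\infty}\frac{r^{n}}{[(r+s)(r+s-k)]^{(n+1)/2}},
\end{equation}
and invert term by term. The key identity, for any $\alpha>0$, is obtained by representing $(r+s)^{-\alpha}$ and $(r+s-k)^{-\alpha}$ as Gamma integrals and then substituting $T=a+b$ with $0<a<T$:
\begin{equation}
\frac{1}{[(r+s)(r+s-k)]^{\alpha}} = \int_{0}^{\infty}\eee^{-sT}\int_{0}^{T}\eee^{ka}\,\eee^{-rT}\,\frac{(a(T-a))^{\alpha-1}}{\Gamma(\alpha)^{2}}\,\ddd a\,\ddd T.
\end{equation}
Applying this with $\alpha=(n+1)/2$ and using uniqueness of the double Laplace transform on $\{0<a<T\}$ identifies the density as a series
\begin{equation}
p^{A}_{r}(a) \;\propto\; \eee^{-rT}\sum_{n=0}^{\infty}\frac{r^{n}(a(T-a))^{(n-1)/2}}{\Gamma((n+1)/2)^{2}},
\end{equation}
with overall prefactor fixed by $\int_{0}^{T}p^{A}_{r}(a)\,\ddd a=1$.

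Setting $x=r\sqrt{a(T-a)}$ and using the elementary rearrangement $r^{n}(a(T-a))^{(n-1)/2}=x^{n}/\sqrt{a(T-a)}$ collapses the sum into the function $W$ of \eqref{eq:scalingfunction}, producing \eqref{eq:occtimedistfinal}. To derive the Bessel-plus-hypergeometric closed form for $W$, I would split the defining series according to the parity of $j$. The odd terms $j=2m+1$ contribute $\sum_{m}x^{2m}/(m!)^{2}=I_{0}(2x)$ via the standard series for the modified Bessel function, while the even terms $j=2m$, after applying $\Gamma(m+\tfrac12)^{2}=\pi((2m)!)^{2}/(16^{m}(m!)^{2})$ together with the Pochhammer identity $(1)_{m}/((\tfrac12)_{m}^{2}\,m!)=16^{m}(m!)^{2}/((2m)!)^{2}$, are exactly the series $(x\pi)^{-1}\,{}_{1}F_{2}(\{1\},\{\tfrac12,\tfrac12\},x^{2})$.

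The main obstacle is justifying the term-by-term inversion of the geometric series: it converges only in the region $r<\sqrt{(r+s)(r+s-k)}$, so one must either perform the manipulations there and extend by analyticity in $(k,s)$, or truncate the series and invoke dominated convergence after Laplace inversion. A secondary concern is applying Fubini near the boundary singularity at $a\downarrow 0$ in the $n=0$ term, which is integrable but not bounded.
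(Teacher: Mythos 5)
Your route is essentially the paper's: obtain $\tG_0(k,s)=1/\sqrt{s(s-k)}$, insert it into Theorem~\ref{thm:RR}, expand $\tG_r$ as a geometric series in $r$, invert term by term with the double Gamma-integral identity, and resum into $W$; your extras (deriving $\tG_0$ from the arcsine law, checking the $I_0$ plus ${}_1F_2$ identity by parity) are correct, and the Fubini/term-by-term worries are harmless since for real $k$ all integrands and all terms are nonnegative, so Tonelli plus uniqueness of the Laplace transform in $s$ (for $s$ large) settles the interchange. The one place you must not hedge is the constant: uniqueness of the double Laplace transform fixes the prefactor exactly, and your own identity with $\alpha=(n+1)/2$ yields
\begin{equation}
p^A_r(a)\;=\;\eee^{-rT}\sum_{n\geq 0}\frac{r^{n}\,(a(T-a))^{(n-1)/2}}{\Gamma(\frac{n+1}{2})^{2}}
\;=\;r\,\eee^{-rT}\,W\big(r\sqrt{a(T-a)}\,\big),\qquad a\in(0,T),
\end{equation}
with no freedom left and, notably, no factor $1/T$. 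This version is the correctly normalised one: setting $k=0$ gives $\sum_{j}r^{j}/(s+r)^{j+1}=1/s$, so it integrates to $1$ over $(0,T)$, and its $j=0$ term reproduces $\eee^{-rT}$ times the arcsine density \eqref{eq:occtnores} (the no-reset contribution), whereas the prefactor $r/T$ in \eqref{eq:occtimedistfinal} makes the total mass $1/T$. The mismatch is not in your method but in the paper's bookkeeping: the ansatz \eqref{eq:occtimedist} is written for the density in the scaled variable $a\in(0,1)$, yet the change of variables leading to \eqref{eq:changedvar} drops the Jacobian $\ddd T\,\ddd a=\ddd t_1\,\ddd t_2/(t_1+t_2)$, which is exactly where the spurious $1/T$ enters. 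So carry your exact constant through instead of ``fixing it by normalisation''; had you done the normalisation you propose, you would have discovered that it does not reproduce the printed prefactor, and your proof should state the $r\,\eee^{-rT}W$ form explicitly.
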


\begin{proof}
In what follows, the regions of convergence of the generating functions will be obvious, so 
we do not specify them.
 
The non-reset generating function in \eqref{eq:genfunc} for the occupation time started 
at $X_0=0$ is known to be 
\cite{Majumdar05}
\begin{equation}
\label{eq:satya}
\tG_{0}(k,s) = \frac{1}{\sqrt{s(s-k)}}.
\end{equation}
This can be explicitly inverted to obtain the density in (\ref{eq:occtnores}).

To find the Laplace transform of the reset generating function, we use Theorem~\ref{thm:RR}. 
Inserting \eqref{eq:satya} into \eqref{eq:RR}, we find
\begin{equation}
\label{eq:resetGF}
\tG_{r}(k,s) = \frac{1}{\sqrt{(s+r)(s+r-k)} - r}.
\end{equation}
This can be explicitly inverted to obtain the density in \eqref{eq:occtimedistfinal}, as follows.
Write
\begin{align}
\label{eq:occtimedist}
p^A_r(a) = \eee^{-rT}H(aT,(1-a)T),
\end{align}
where $H$ is to be determined. Substituting this form into \eqref{eq:lapgenfunc}, we get
\begin{equation}
\tG_{r}( k, s) = \int_{0}^{\infty} \ddd T \int_{0}^{1} \ddd a\,\,\eee^{kTa}\,
\eee^{-(s+r)T}\,H(aT,(1-a)T).
\end{equation}
Performing the change of variable $t_{1} = aT$ and $t_{2}=(1-a)T$, we get
\begin{align}
\label{eq:changedvar}
\tG_{r}(k, s) = \int_{0}^{\infty} \ddd t_{1} \int_{0}^{\infty} \ddd t_{2}\,\,
\eee^{-(r+s-k)t_{1}}\eee^{-(r+s)t_{2}}H(t_{1}, t_{2}).
\end{align}
Let $\lambda_{1} = r+s-k$ and $\lambda_{2}= r+s$. Then \eqref{eq:changedvar}, along 
with the right-hand side of \eqref{eq:resetGF}, gives
\begin{align}
\label{eq:occlapbeforeinversion}
\int_{0}^{\infty}\ddd t_{1} \int_{0}^{\infty} \ddd t_{2}\,\,\eee^{-\lambda_{1}t_{1} 
- \lambda_{2}t_{2}}H(t_{1}, t_{2}) = \frac{1}{\sqrt{\lambda_{1}\lambda_{2}} - r}.
\end{align}
To invert the Laplace transform in \eqref{eq:occlapbeforeinversion}, we expand the
right-hand side in $r$,
\begin{equation}
\int_{0}^{\infty}\ddd t_{1} \int_{0}^{\infty} \ddd t_{2}\,\,
\eee^{-\lambda_{1}t_{1} - \lambda_{2}t_{2}}H(t_{1}, t_{2}) 
= \sum_{j=0}^{\infty}\frac{r^{j}}{(\lambda_{1}\lambda_{2})^{(j+1)/2}}, 
\end{equation}
and invert term by term using the identity
\begin{equation}
\frac{1}{\Gamma(\alpha)} \int_{0}^{\infty} \ddd t\,\, t^{\alpha - 1} \eee^{-\lambda t} 
= \frac{1}{\lambda^{\alpha}}, \qquad \alpha>0.
\end{equation}
This leads us to the expression
\begin{equation}
H(t_{1}, t_{2}) = \sum_{j=0}^{\infty}\frac{r^{j}}{\Gamma(\frac{j+1}{2})^{2}}(t_{1}t_{2})^{(j-1)/2} 
= r\sum_{j=0}^{\infty}\frac{(r\sqrt{t_{1}t_{2}})^{j-1}}{\Gamma(\frac{j+1}{2})^{2}}.
\end{equation}
Substituting this expression into \eqref{eq:occtimedist}, we find the result in
\eqref{eq:occtimedistfinal}--\eqref{eq:scalingfunction}.
\end{proof}

The arcsine density in (\ref{eq:occtnores}) is recovered in the limit $r\downarrow 0$ by noting that 
$W(x)\sim (\pi x)^{-1}$ as $x\downarrow 0$. On the other hand, we have
\begin{equation}
W(x)\sim \frac{1}{2\sqrt{\pi x}}\, e^{2x},\qquad x\to\infty
\end{equation}
Consequently,
\begin{equation}
T\,p^A_r(aT) \sim \frac{\sqrt{r}}{2\sqrt{\pi\, T}\, 
\left(a(1-a)\right)^{1/4}}\, \eee^{-r\,T\left(1- 2\sqrt{a(1-a)}\,\right)}, \qquad a \in (0,1),\quad T\to\infty.
\label{sc_large}
\end{equation}
Keeping only the exponential term, we thus find that $(T^{-1}A_T)_{T>0}$ satisfies 
the LDP with speed $T$ and with rate function $\chi^A_r$ given by
\begin{equation}
\label{ldv.rate}
\chi^A_r(a)= r\left(1- 2 \sqrt{a(1-a)}\,\right),\qquad a\in [0,1].
\end{equation}

\begin{figure}[t]
\vspace{0.5cm}
\centering
\includegraphics{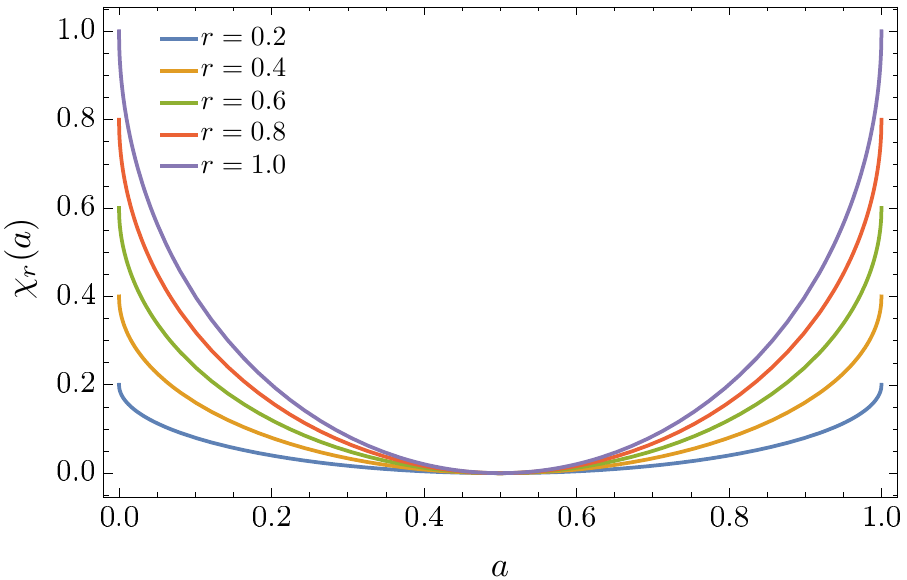}
\caption{Rate function $a \mapsto \chi^A_r(a)$ for the positive occupation time of rBM.}
\label{fig:rateOCC}
\end{figure}

The rate function $\chi^A_r$ is plotted in Fig.~\ref{fig:rateOCC}.  As argued in \cite{Meylahn15}, 
\eqref{ldv.rate} can also be obtained by noting that the largest real pole of $\tG(k,s)$ in the 
$s$-complex plane is
\begin{equation}
\lambda_r(k)=\frac{1}{2}\left(k-2r+\sqrt{k^2+4r^2}\right),\qquad k\in\R,
\end{equation}
which defines the scaled cumulant generating function of $A_T$ as $T\to\infty$ (see \eqref{eq:scgfgen1} 
below). Since this function is differentiable for all $k\in\R$, we can use the G\"artner--Ellis Theorem 
\cite[Chapter V]{H00} to identify $\chi^A_r$ as the Legendre transform of $\lambda_r$. 

Note that the positive occupation time does not satisfy the LDP when $r=0$, since $p^A_0(a)$ is not 
exponential in $T$ and does not concentrate as $T\to\infty$. Thus, here resetting is `strong enough' 
to force concentration of $T^{-1}A_T$ on the value $\frac{1}{2}$, with fluctuations around this value 
that are determined by the LDP and the rate function $\chi^A_r$ in \eqref{ldv.rate}. In particular, since 
$\chi^A_r(0)=\chi^A_r(1)=r$, the probability that rBM always stays positive or always stays negative 
is determined on the large deviation scale by the probability $\eee^{-rT}$ of having no reset up to 
time $T$.

Note that $\phi^*_r=\tfrac12$ for $r \geq 0$. Hence the positive occupation time does not satisfy
the condition in Theorem~\ref{thm:genrateprop}. 

\section{Area}
\label{sec:area}

We next consider the area of rBM, defined as 
\begin{equation}
B_{T} =  \int_{0}^{T} W^r_{t}\, \ddd t.
\end{equation}
Its density with respect to the Lebesgue measure is denoted by $p^B_r(b)$, $b\in\R$. The full 
distribution for $T$ fixed is not available, and therefore we start by computing a few moments.

\begin{theorem}
\label{thm:areadist}
For every $T \in (0,\infty)$, the area of rBM for $r>0$ has vanishing odd moments and 
non-vanishing even moments. The first two even moments are
\begin{eqnarray}
\label{eq:areamom}
\EE_{r}[B_{T}^{2}] &=& \frac{2}{r^{3}}\left(rT - 2 + \eee^{-rT}(2+rT)\right),\\ \nonumber
\EE_{r}[B_{T}^{4}] &=& \frac{1}{r^{6}}\left(12(rT)^{2} + 120rT - 840 
+ \eee^{-rT}[9 (rT)^{4} + 68(rT)^{3}+288(rT)^{2}+720\,rT+840]\right).
\end{eqnarray}
\end{theorem}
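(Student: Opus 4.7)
The plan is to combine a symmetry argument with Theorem~\ref{thm:RR}.

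\textbf{Odd moments.} The reset position $x_*=0$ is fixed by the reflection $x \mapsto -x$, and each free BM excursion satisfies $(W^i_t)_{t\geq 0}\eqdist(-W^i_t)_{t\geq 0}$; hence the pasted process satisfies $(W^r_t)_{t\geq 0}\eqdist(-W^r_t)_{t\geq 0}$ and therefore $B_T\eqdist -B_T$. Since $\EE_r[|B_T|^p]\leq T^{p-1}\int_0^T\EE_r[|W^r_t|^p]\,\ddd t<\infty$ for every $p\geq 1$, all moments of $B_T$ exist and the odd ones vanish.

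\textbf{Even moments via Theorem~\ref{thm:RR}.} I would compute $\EE_r[B_T^{2n}]$ recursively in $n$. Without resetting, $B_T=\int_0^T W_t\,\ddd t$ is centred Gaussian with variance $T^3/3$, hence $\EE_0[B_T^{2n}]=(2n-1)!!\,(T^3/3)^n$, and its Laplace transform in $T$ is the rational function
\begin{equation}
\tilde m_{2n}(0,s) := \int_0^\infty \eee^{-sT}\,\EE_0[B_T^{2n}]\,\ddd T = \frac{(2n-1)!!\,(3n)!}{3^n\,s^{3n+1}}.
\end{equation}
Rewriting \eqref{eq:RR} as $\tG_r = \tG_0 + r\,\tG_0\,\tG_r$ and formally expanding both sides as power series in $k$, with coefficients $\tilde m_{2n}(r,s)/(2n)!$ on the left and $\tilde m_{2n}(0,r+s)/(2n)!$ for $\tG_0$, equating coefficients of $k^{2n}$ yields a triangular linear system that determines each $\tilde m_{2n}(r,s)$ as an explicit rational function of $s$. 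The formal series manipulation is legitimate because only finitely many terms contribute to each coefficient; equivalently, one may work throughout with the characteristic function $\EE_r[\eee^{ikB_T}]$, which converges.

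\textbf{Inversion.} Solving the recursion at $n=1$ gives $\tilde m_2(r,s) = 2/[s^2(s+r)^2]$, and at $n=2$ a rational function whose denominator is $s^3(s+r)^5$. Partial-fraction decomposition, followed by termwise inversion using
\begin{equation}
\mathcal{L}^{-1}\{s^{-k}\}(T) = \frac{T^{k-1}}{(k-1)!}, \qquad \mathcal{L}^{-1}\{(s+r)^{-k}\}(T) = \frac{T^{k-1}\,\eee^{-rT}}{(k-1)!},
\end{equation}
delivers the two expressions in \eqref{eq:areamom}. The main (purely computational) obstacle is the bookkeeping in the partial-fraction expansion of $\tilde m_4(r,s)$, whose poles of orders $3$ at $s=0$ and $5$ at $s=-r$ produce eight elementary fractions whose residues must be tracked carefully; all other steps reduce to polynomial algebra and term-by-term Laplace inversion.
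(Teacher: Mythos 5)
Your proposal follows essentially the same route as the paper's proof: Theorem~\ref{thm:RR} applied to the Gaussian reset-free area, expansion of the Laplace-transformed generating function in powers of $k$ (your coefficients $\tilde m_2(r,s)=2/[s^2(s+r)^2]$ and the $k^4$ term with denominator $s^3(s+r)^5$ agree with the paper's \eqref{eq:QR}), followed by partial fractions and term-by-term Laplace inversion. The only departures are minor refinements: you obtain the vanishing of the odd moments from the symmetry $B_T \eqdist -B_T$ rather than from the evenness in $k$ of the expansion, and you explicitly flag that the $k$-expansion must be read formally, or via the characteristic function, since $\tG_0(k,s)=\infty$ for real $k\neq 0$ -- a point the paper passes over silently.
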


\begin{proof}
The result follows directly from the renewal formula \eqref{eq:RR} and the Laplace transform of the 
generating function of $B_T$ without resetting,
\begin{align}
\label{eq:Q0}
\tQ_{0}(k, s) = \int_{0}^{\infty} \ddd T \;\eee^{-sT}\mathbb{E}_0[\eee^{kB_{T}}] 
= \int_{0}^{\infty}\ddd T \;\eee^{\frac16 k^{2}T^{3}-sT},
\end{align}
because $B_T$ is a Gaussian random variable with mean 0 and variance $\tfrac13 T^3$. 
Expanding the exponential in $k$ and using \eqref{eq:RR}, we obtain the following expansion 
for the Laplace transform of the characteristic function with resetting:
\begin{equation}
\label{eq:QR}
\tQ_{r}(k, s) = \frac{1}{s} + \frac{1}{s^{2}(r+s)^{2}}k^{2} + \frac{(r+10s)}{s^{3}(r+s)^{5}}k^{4} 
+ O(k^{6}).
\end{equation}
Taking the inverse Laplace transform, we find that the odd moments are all zero, because there are 
no odd powers of $k$, and that the even moments are given by the inverse Laplace transforms 
$\mathcal{L}^{-1}$ of the corresponding even powers of $k$. Thus,
\begin{eqnarray}
\EE_{r}[B_{T}^{2}] &=& \mathcal{L}^{-1}\Big[\frac{2!}{s^{2}(r+s)^{2}}\Big],\nonumber\\
\EE_{r}[B_{T}^{4}] &=& \mathcal{L}^{-1}\Big[\frac{4!(r+10s)}{s^{3}(r+s)^{5}}\Big],
\end{eqnarray}
which yields the results shown in \eqref{eq:areamom}.
\end{proof}

The second moment, which gives the variance, shows that there is a crossover in time from a 
\emph{reset-free regime} characterized by
\begin{equation}
\EE_r[B_T^2]\sim \tfrac13 T^3, \qquad T \downarrow 0,
\end{equation}
which is the variance obtained for $r=0$, to a \emph{reset regime} characterized by
\begin{equation}
\EE_r[B_T^2]\sim \frac{2T}{r^2},\qquad T\to\infty.
\end{equation}
The crossover where the two regimes meet is given by $T=\sqrt{6}/r$, which is proportional 
to the mean reset time. This gives, as illustrated in Fig.~\ref{fig:vararea}, a rough estimate of the time 
needed for the variance to become linear in $T$ because of resetting.

\begin{figure}[t]
\vspace{0.3cm}
\includegraphics{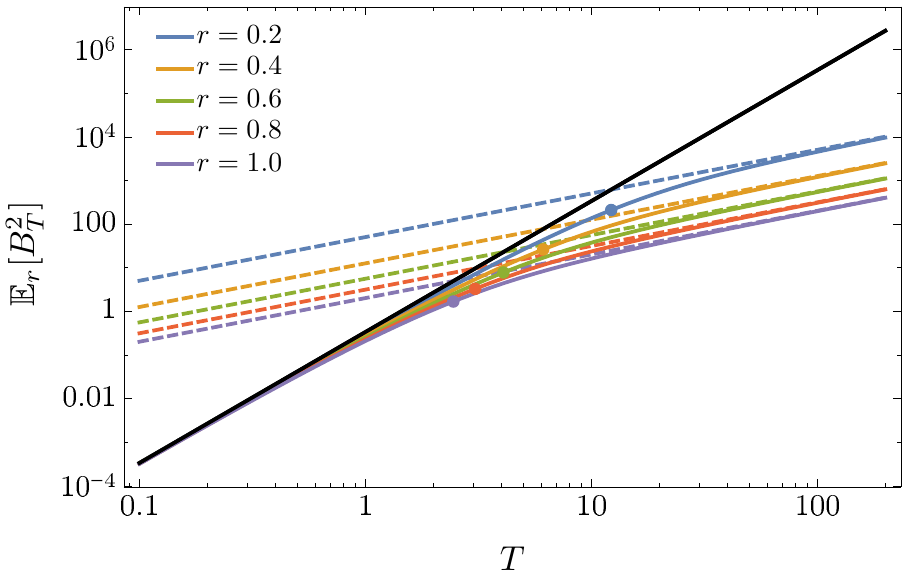}
\caption{Log-log plot of the variance of the area $B_T$ of rBM, showing the crossover from the 
$T^3$-scaling (black line) to the $T$-scaling (dashed lines) for various values of $r$. The filled 
circles show the location of the crossover time $T=\sqrt{6}/r$.}
\label{fig:vararea}
\end{figure}

The small fluctuations of $B_T$ of order $\sqrt{T}$ around the origin are Gaussian-distributed. This is 
confirmed by noting that the even moments of $B_T$ scale like
\begin{equation}
\label{eq:dommom1}
\EE_r[B_T^n] \sim \frac{(2n)!}{n!} \left(\frac{\sqrt{T}}{r}\right)^n,\quad T\to\infty,
\end{equation}
so that
\begin{equation}
\EE_r\left[\left(\frac{B_T}{\sqrt{T}}\right)^n\right] \sim \frac{(2n)!}{n! r^n}, \qquad T \to\infty,
\end{equation}
for $n$ even. This implies that the cumulants all asymptotically vanish, except for the variance. 
Indeed, it can be verified that
\begin{equation}
\kappa_{2} = \lim_{T\to\infty} \EE_{r}[T^{-1}B_{T}^{2}] = \frac{2}{r^{2}},
\end{equation}
while
\begin{equation}
\kappa_{4} = \lim_{T\to\infty} \EE_{r}[T^{-2}B_{T}^{4}] - 3\EE_{r}[T^{-1}B_{T}^{2}]^{2}
= \frac{12}{r^{4}} - 3\left(\frac{2}{r^{2}}\right)^{2}=0.
\end{equation}
and similarly for all higher even cumulants. This suggests the following central limit theorem. 

\begin{theorem} 
\label{thm:CLT}
The area of rBM satisfies the central limit theorem,
\begin{equation}
\lim_{T\to\infty} \sigma\sqrt{T}\ p^B_r\left(\frac{b}{\sigma\sqrt{T}}\right) = N(0,1)
\end{equation}
with $N(0,1)$ the standard Gaussian distribution and $\sigma = 2/r^2$.
\end{theorem}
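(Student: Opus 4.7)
The plan is to exploit the renewal structure of rBM: between two successive reset times the process evolves as a fresh Brownian motion starting from $0$. Writing $B_T$ as a random sum of i.i.d.\ excursion contributions plus a boundary remainder, the distributional convergence follows from a random-sum CLT, and the pointwise density convergence claimed in the theorem then follows by a Fourier-inversion / local-CLT argument.

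Concretely, with $\sigma_0=0$ and $\tau_i=\sigma_{i+1}-\sigma_i$, set
\begin{equation*}
X_i = \int_{\sigma_i}^{\sigma_{i+1}} W^r_t\,\ddd t, \qquad R_T = \int_{\sigma_{N(T)}}^T W^r_t\,\ddd t,
\end{equation*}
so that $B_T = \sum_{i=0}^{N(T)-1} X_i + R_T$. The $\tau_i$ are i.i.d.\ $\mathcal{E}_r$, and conditionally on $\tau_i$ the variable $X_i$ is centered Gaussian with variance $\tau_i^3/3$; hence the $X_i$ are i.i.d.\ with $\EE[X_i]=0$ and $\EE[X_i^2]=\tfrac{1}{3}\EE[\tau_i^3]=2/r^3$. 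Since $N(T)/T\to r$ almost surely, the random-sum (Anscombe) CLT gives
\begin{equation*}
\frac{1}{\sqrt{T}}\sum_{i=0}^{N(T)-1} X_i \ \Longrightarrow\ N\bigl(0,2/r^2\bigr), \qquad T\to\infty.
\end{equation*}
The remainder $R_T$ integrates a Brownian motion over a time $T-\sigma_{N(T)}$ stochastically dominated by an $\mathcal{E}_r$ random variable, hence has bounded second moment and $R_T/\sqrt{T}\to 0$ in probability. Combining these two facts yields $B_T/\sqrt{T}\Rightarrow N(0,2/r^2)$, which matches the $\kappa_2=2/r^2$ computed before the theorem.

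To upgrade to the pointwise density convergence stated in the theorem (a local CLT), observe that $B_T$ is, conditionally on the reset structure, a centered Gaussian, so it admits a smooth density for every $T>0$ and a rapidly decaying characteristic function $\varphi_T$. Pointwise convergence $\varphi_T(\xi/\sqrt{T})\to \eee^{-\xi^2/r^2}$ is obtained from Theorem~\ref{thm:RR} at $k=i\xi$ via inverse Laplace transform. The main obstacle is to establish a uniform-in-$T$ integrable tail bound on $\xi\mapsto\varphi_T(\xi/\sqrt{T})$: my plan is to condition on $(N(T),\tau_0,\ldots,\tau_{N(T)-1})$, use the identity $\varphi_T(\xi)=\EE\bigl[\exp(-\tfrac{\xi^2}{6}\sum_i\tau_i^3)\bigr]$ (up to the partial-excursion correction, which is easily absorbed), and exploit the fact that with overwhelming probability a positive fraction of the $\tau_i$ exceed a fixed constant, which produces the Gaussian decay needed for dominated convergence in the Fourier inversion formula.

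An alternative route is the method of moments, staying entirely within the generating-function framework used elsewhere in the paper: extend the expansion of $\tilde Q_r(k,s)$ obtained from Theorem~\ref{thm:areadist} to arbitrary order, verify that $\EE_r[B_T^{2m}]/T^m\to (2m-1)!!\,(2/r^2)^m$ for every $m\in\N$, and invoke moment-determinacy of the Gaussian to get weak convergence; the same Fourier/smoothness argument then upgrades weak convergence to density convergence.
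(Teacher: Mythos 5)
Your route is genuinely different from the paper's. The paper works entirely on the transform side: it writes $p^B_r$ as a double (Fourier in $k$, Bromwich in $s$) inversion of the renewal formula of Theorem~\ref{thm:RR}, expands $\tQ_0(k,s)=\tfrac{1}{s}-\tfrac{k^2}{s^4}+O(k^4)$, locates the simple pole $s^*=r-\tfrac{l^2}{r^2T}+O(l^4/T^2)$ after the rescaling $k=l/\sqrt{T}$, and deforms the contour through it to produce the Gaussian with variance $2/r^2$. You instead use the renewal structure probabilistically: $B_T=\sum_{i=0}^{N(T)-1}X_i+R_T$ with $X_i$ i.i.d., centered, $\EE[X_i^2]=\tfrac13\EE[\tau_i^3]=2/r^3$, and Anscombe's CLT plus the negligible remainder give $B_T/\sqrt{T}\Rightarrow N(0,2/r^2)$. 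For the distributional statement your argument is complete and arguably more rigorous than the paper's (which silently ignores the rest of the Bromwich contour and other singularities); it also explains the variance $2/r^2=r\cdot\EE[X_i^2]$ structurally rather than as the outcome of a pole expansion.

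The gap is in the upgrade to the density (local CLT) statement that the theorem actually asserts. Your dominating bound is obtained from the event that a positive fraction of the $\tau_i$ exceed a fixed constant: this yields $|\varphi_T(\xi/\sqrt{T})|\leq \eee^{-c\xi^2}+\PP(\text{bad event})$ with $\PP(\text{bad event})\leq \eee^{-cT}$, and the second term, being independent of $\xi$, is not integrable over $\xi\in\R$, so dominated convergence in the Fourier inversion does not follow as stated. The hole is fixable: for $|\xi|$ beyond a window growing with $T$ use the conditional variance bound $\tfrac13\sum_i\tau_i^3+\tfrac13(T-\sigma_{N(T)})^3\geq \tfrac13\,T^3/(N(T)+1)^2$ (power-mean inequality), so that $|\varphi_T(\xi/\sqrt{T})|\leq \EE\bigl[\exp\bigl(-\tfrac{\xi^2T^2}{18(N(T)+1)^2}\bigr)\bigr]$, and control the expectation with the Poisson tail of $N(T)$; splitting the $\xi$-axis into $|\xi|\leq \epsilon\sqrt{T}$ (your bound), an intermediate window, and the far tail then gives a uniform integrable majorant. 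Your method-of-moments alternative faces the same limitation: it yields only weak convergence, so it does not by itself prove the pointwise density convergence either. With the patched tail bound your proof is correct and self-contained; as written, the local step is a sketch with a concrete missing estimate.
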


\begin{proof}
We start from the Laplace inversion formula of the renewal formula,
\begin{equation}
p^B_r(b) = \eee^{-rT}\int_{\R}\frac{dk}{2\pi} \eee^{-ikb} \int_{c-i\infty}^{c+i\infty} 
\frac{ds}{2\pi i} \eee^{sT} \,\frac{\tQ_0(k,s)}{1-r \tQ_0(k,s)},
\end{equation}
where $c$ is any value in the region of convergence of $\tQ_0(k,s)$ in the $s$-complex plane. 
Rescaling $b$ by $b=\bar b\sqrt{T}$, as is standard in proofs of the central limit theorem, 
we obtain
\begin{equation}
p^B_r(\bb \sqrt{T}) =\frac{\eee^{-rT}}{\sqrt{T}}\int_{\R}\frac{dl}{2\pi} 
\eee^{-il\bb} \int_{c-i\infty}^{c+i\infty} \frac{ds}{2\pi i}\eee^{sT} \,
\frac{\tQ_0(l/\sqrt{T},s)}{1-r \tQ_0(l/\sqrt{T},s)},
\end{equation}
where $l=k/\sqrt{T}$. Given a fixed $l$ and letting $T\to\infty$, we use the known expression 
of $\EE_0[\eee^{ikB_T}]$ in \eqref{eq:Q0} to Taylor-expand $\tQ_0(k,s)$ around $k=0$,
\begin{equation}
\tQ_0(k,s) = \frac{1}{s}-\frac{k^2}{s^4}+O(k^4),
\end{equation}
to obtain
\begin{equation}
\frac{\tQ_0(l/\sqrt{T},s)}{1-r \tQ_0(l/\sqrt{T},s)}=\frac{1+O(l^2/T)}{s-r+\frac{rl^2}{s^3 T}+O(l^4/T^2)}.
\end{equation}
This expression has a simple pole at
\begin{equation}
s^* = r-\frac{l^2}{r^2 T}+O(l^4/T^2),
\end{equation}
so that, deforming the Bromwich contour through that pole, we get
\begin{equation}
\sqrt{T}\, p^B_r(\bb \sqrt{T}) =\eee^{-rT}\int_{\R}\frac{dl}{2\pi} \eee^{-il\bb} \eee^{s^*T}  
= \int_{\R}\frac{dl}{2\pi}\eee^{-il\bb} \eee^{-l^2/r^2+O(l^4/T)}.
\end{equation} 
As $T\to\infty$, only the quadratic term remains in the exponential, which yields a Gaussian distribution 
with variance $2/r^2$.
\end{proof}

The convergence to the Gaussian distribution can be much slower than the mean reset time, as can 
be seen in Fig.~\ref{fig:vararea}, especially for small reset rates. From simulations, we have found that 
the distribution of $T^{-1/2}B_T$ is well approximated by a Gaussian distribution near the origin. However, 
the tails are strongly non-Gaussian, even for large $T$, indicating that there are important finite-time 
corrections to the central limit theorem, related to rare events involving few resets and, therefore, to 
large Gaussian excursions characterised by the $T^3$-variance.

These corrections can be analysed, in principle, by going beyond the dominant scaling in time of the 
moments shown in \eqref{eq:dommom1}, so as to obtain corrections to the cumulants, which do not 
vanish for finite $T$. It also seems possible to obtain information about the tails by performing a 
saddle-point approximation of the combined Laplace--Fourier inversion formula for values of $B_T$ 
scaling with $T^{3/2}$. We have attempted such an approximation, but have found no results supported 
by numerical simulations performed to estimate $p_r^B(b)$. More work is therefore needed to find 
the tail behavior of this density in the intermediate regime where $T^{1/2} \lesssim b \lesssim T^{3/2}$. 

At this point, we can only establish that $(T^{-1}B_T)_{T>0}$ follows a weak LDP with $\chi_r^B \equiv 0$, implying that
$p_r^B(b)$ decays slower than exponentially on the scale $T$. This follows from the 
general upper bound
\begin{equation}
\label{chirbd}
\chi_r(\phi) \leq \chi_0(\phi)+r \qquad \forall\,\phi \in \R,\,r>0
\end{equation}
found in \cite{Meylahn15a}. We know that $\chi_0^B \equiv 0$, since for every $M \in (0,\infty)$ the probability that the 
Brownian motion stays above $M$ after a time of order $M^2$ decays like $1/\sqrt{T}$ as $T\to\infty$. 
Hence it follows that $\chi^B_r \leq r$. Since rate functions are typically convex, the latter can only mean 
that $\chi_r^B \equiv 0$. 

Note, incidentally, that \eqref{chirbd} is satisfied by the rate function $\chi_r^A$ of the positive occupation 
time (see \eqref{ldv.rate} and Fig.~\ref{fig:rateOCC}).

\section{Absolute area}
\label{sec:absolutearea}

We finally consider the absolute area of rBM, defined as
\begin{equation}
C_{T} = \int_{0}^{T} |W^r_{t}|\, \ddd t,
\end{equation}
which can also be seen as the area of an rBM reflected at the origin. Its density with respect
to the Lebesgue measure is denoted by $p^C_r(c)$, $c \in [0,\infty)$. This density was studied 
for pure BM ($r=0$) by Kac \cite{kac1946} and Tak\'acs \cite{Takacs93} (see also \cite{tolmatz2003}).
It satisfies the LDP with speed $T$, when $C_T$ is rescaled by $T$, but with a divergent mean, 
which translates into the rate function tending to zero at infinity (see Fig.~\ref{fig:rrate}). The effect of resetting is to 
bring the mean of $T^{-1}C_T$ to a finite value. Below the mean, we find that the LDP holds with 
speed $T$ and a non-trivial rate function derived from Theorem~\ref{thm:RR}, whereas above 
the mean we find that the rate function vanishes, in agreement with Theorem~\ref{thm:genrateprop}. 
This indicates that the upper tail of $T^{-1}C_T$ decays slower than exponentially in $T$. 

As a prelude, we show how the mean and variance of $C_T$ are affected by resetting. We do not 
know the full distribution, and also the scaling remains elusive. 

\begin{theorem}
\label{thm:absval}
The absolute area of rBM has a mean and a variance given by
\begin{equation}
\EE_{r}[C_{T}] = T^{3/2} f_{1}(rT), 
\qquad \Var_{r}[C_{T}] = T^{3}f_{2}(rT), \qquad r>0,
\end{equation}
where
\begin{equation}
f_{1}(\rho) = \frac{1}{\sqrt{2\pi}} \left[\frac{\eee^{-\rho}}{\rho} 
+ \frac{\sqrt{\pi}}{2 (\rho)^{3/2}}\,(2\rho-1)\,\erf[\sqrt{\rho}\,]\right]
\end{equation}
and
\begin{align}
\label{eq:f2}
f_{2}(\rho) = \frac{1}{8\pi(\rho)^{3}}\left[2\pi\left(2\rho^{2} + \rho- 6 + (5\rho + 6)\eee^{-\rho}\right)
- \left(2\sqrt{\rho}\,\eee^{-\rho} + \sqrt{\pi}(2\rho - 1)\erf[\sqrt{\rho}]\right)^{2}\right].
\end{align}
\end{theorem}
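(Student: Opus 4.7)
The plan is to follow the same strategy used for the area in Theorem~\ref{thm:areadist}: apply the renewal identity \eqref{eq:RR} of Theorem~\ref{thm:RR} and expand in powers of $k$ to read off the Laplace transforms of the moments, then invert these transforms explicitly. The moment generating function expansion
\begin{equation}
\tG_r(k,s) = \sum_{n=0}^\infty \frac{k^n}{n!}\int_0^\infty \eee^{-sT}\,\EE_r[C_T^n]\,\ddd T
\end{equation}
means that $\partial_k^j \tG_r(k,s)|_{k=0}$ produces the Laplace transform of $\EE_r[C_T^j]$, and the renewal formula expresses each such derivative at $k=0$ in terms of the corresponding derivatives of $\tG_0(k,s)$ at $k=0$.

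First I would compute the reset-free Laplace-transformed moments. Using $\EE_0[|W_t|] = \sqrt{2t/\pi}$, one finds $\EE_0[C_T] = \tfrac{2}{3}\sqrt{2/\pi}\,T^{3/2}$ and hence $\partial_k \tG_0(k,s)|_{k=0} = 1/(\sqrt{2}\,s^{5/2})$. For the second moment I would write $\EE_0[C_T^2] = 2\int_0^T\!\int_0^t \EE[|W_s||W_t|]\,\ddd s\,\ddd t$ and use the bivariate Gaussian formula for $\EE[|W_s||W_t|]$ (splitting $W_t = W_s + (W_t-W_s)$) to obtain a closed expression, whose Laplace transform in $T$ yields $\partial_k^2 \tG_0(k,s)|_{k=0}$. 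Plugging these into the Taylor expansion of $\tG_r(k,s)=\tG_0(k,r+s)/[1-r\tG_0(k,r+s)]$ around $k=0$ yields
\begin{equation}
\int_0^\infty \eee^{-sT}\,\EE_r[C_T]\,\ddd T = \frac{1}{\sqrt{2}\,s^2\sqrt{r+s}},
\end{equation}
and a similar, more involved rational-cum-algebraic expression in $s$ and $r$ for the Laplace transform of $\EE_r[C_T^2]$.

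Next I would invert these Laplace transforms. The inversion for the mean uses the elementary identities $\mathcal{L}^{-1}[(s+r)^{-1/2}](T)=\eee^{-rT}/\sqrt{\pi T}$ together with the relation $\mathcal{L}^{-1}[F(s)/s^2](T)=\int_0^T (T-u)f(u)\,\ddd u$, producing the expression for $f_1(rT)$ after an integration by parts that introduces the $\erf$. For the second moment the inversion is entirely analogous but more laborious: partial fractions in powers of $1/s$ and $1/(r+s)$, combined with the same elementary inverse transforms, produce a sum of terms involving $\eee^{-rT}$, $\sqrt{T}\,\eee^{-rT}$ and $\erf[\sqrt{rT}]$. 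Subtracting $(\EE_r[C_T])^2$ — which explains the appearance of the squared bracket in \eqref{eq:f2} — and grouping terms by powers of $\rho=rT$ yields the stated formula for $f_2$.

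The main obstacle is clearly the second-moment calculation. It has two delicate parts: computing $\EE_0[C_T^2]$ from the joint law of $(|W_s|,|W_t|)$ and, more painfully, performing the algebraic partial-fraction decomposition and Laplace inversion that lead to the explicit form of $f_2$. A useful bookkeeping device is to fix the scaling ansatz $\EE_r[C_T^2] = T^3 g_2(rT)$ at the outset (consistent with the reset-free scaling $T^3$); then the Laplace transform takes the form $s^{-4}$ times a function of $r/s$, and one only needs to track that function. Beyond that, the verification that $\Var_r[C_T]=T^3 f_2(rT)$ is a routine simplification: subtract $T^3 f_1(rT)^2$ from the expression for the second moment, and observe that the $\erf$-squared term in \eqref{eq:f2} matches the square of the $\erf$-containing part of $\EE_r[C_T]$, while the remaining polynomial-exponential combination comes from the second-moment inversion.
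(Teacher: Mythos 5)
Your proposal is correct and follows essentially the same route as the paper's proof: expand the renewal identity \eqref{eq:RR} of Theorem~\ref{thm:RR} to second order in $k$, read off the Laplace transforms of $\EE_r[C_T]$ and $\EE_r[C_T^2]$ (the former being $\tfrac{1}{\sqrt{2}}\,s^{-2}(s+r)^{-1/2}$, exactly as in the paper), invert them, and subtract the squared mean. The only difference is that you compute the reset-free inputs $\EE_0[C_T]=\tfrac{2}{3}\sqrt{2/\pi}\,T^{3/2}$ and $\EE_0[C_T^2]=\tfrac{3}{8}T^{3}$ directly from the Gaussian law of $(W_s,W_t)$, whereas the paper quotes $\EE_0[D]=\tfrac{4}{3\sqrt{2\pi}}$ and $\EE_0[D^2]=\tfrac{3}{8}$ from Tak\'acs; both are fine, and note in passing that the transform of $\EE_r[C_T^2]$ turns out to be purely rational in $s$, so the $\erf$ terms in $f_2$ arise only from the subtracted $(\EE_r[C_T])^2$, consistent with your final grouping.
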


\begin{proof}
The absolute area of pure BM ($r=0$) is known to scale as $T^{3/2}$, so it is convenient to 
rescale $C_T$ as
\begin{equation}
\label{eq:AATrescaled}
C_{T} = T^{3/2} \int_{0}^{1} \ddd t\,|W^r_{t}| = T^{3/2}\,D,
\end{equation}
which defines a new random variable $D$. Expanding \eqref{eq:lapgenfunc} in terms of $k$, 
we get
\begin{align}
\label{eq:G0tildeexpand}
\tG_{0}(k, s) &= \int_{0}^{\infty} \ddd T \eee^{-sT}\left[1 + k T^{3/2}\,\EE_0[D] 
+ \tfrac12k^{2}T^{3}\,\EE_0[D^{2}] + O(k^{3})\right]\nonumber\\
&= \frac{1}{s} + \frac{\EE_0[D]\,\Gamma(\tfrac52)\, k}{s^{5/2}} 
+ \frac{3\,\EE_0[D^{2}]\, k^2}{s^{4}} + O(k^{3}).
\end{align}
Abbreviate $a= \EE_0[D]\,\Gamma(\tfrac52)$ and $b=\EE_0[D^{2}]$ \cite{janson2007}. Inserting 
\eqref{eq:G0tildeexpand} into \eqref{eq:RR}, we find
\begin{equation}
\label{eq:RRexpand}
\tG_{r}(k, s)
= \frac{\frac{1}{s+r} + \frac{ak}{(s+r)^{5/2}} + \frac{3bk^{2}}{(s+r)^{4}} + O(k^{3})}
{1 - r\big[\frac{1}{s+r} + \frac{ak}{(s+r)^{5/2}} + \frac{3bk^{2}}{(s+r)^{4}} + O(k^{3})\big]}
= \frac{\frac{1}{s}\big[1 + \frac{ak}{(s+r)^{3/2}} + \frac{3bk^{2}}{(s+r)^{3}} + O(k^{3})\big]}
{1 - \frac{rak}{s(s+r)^{3/2}} - \frac{3rbk^{2}}{s(s+r)^{3}} + O(k^{3})}.
\end{equation}
Inserting $(1+ck+dk^{2})^{-1} = 1-ck+(c^{2}-d)k^{2} + O(k^{3})$, we obtain 
\begin{equation}
\tG_{r}(k, s) 
= \frac{1}{s} + \frac{a}{s^{2} (s+r)^{1/2}}k + \left(\frac{b}{s^{2}(s+r)^{2}} 
+ \frac{ra^2}{s^3(s+r)^{2}}\right)k^{2} + O(k^{3}).
\end{equation}
We can also expand $\tG_{r}(k, s)$ directly from its definition:
\begin{equation}
\label{eq:lapgenexpand}
\tG_{r}(k, s) 
= \frac{1}{s} + k\int_{0}^{\infty} \ddd T\,\,\eee^{-sT}\, \EE_{r}[C_{T}]  
+ \frac{k^{2}}{2} \int_{0}^{\infty} \ddd T\,\eee^{-sT}\,\EE_{r}[C_{T}^{2}] + O(k^{3}).
\end{equation}
Comparing \eqref{eq:RRexpand} and \eqref{eq:lapgenexpand}, we find
\begin{align}
\int_{0}^{\infty} \ddd T\,\eee^{-sT}\,\EE_{r}[C_{T}]
&= \frac{a}{s^{2} (s+r)^{1/2}},\nonumber\\ 
\frac{1}{2}\int_{0}^{\infty} \ddd T\,\eee^{-sT}\,\EE_{r}[C_{T}^{2}] 
&= \frac{b}{s^{2}(s+r)^{2}} + \frac{ra^2}{s^3(s+r)^{2}}. 
\end{align}

To calculate the first and the second moment, we simply need to invert the Laplace 
transforms. For the mean we find
\begin{equation}
\EE_{r}[C_{T}] = T^{3/2}f_{1}(rT),
\end{equation}
where we use that $\EE_0[D] = \frac{4}{3\sqrt{2\pi}}$ by \cite[Table 3]{Takacs93}. For 
the second moment we use $\EE_0[D^2]=b=\tfrac38$ from the same reference to find
\begin{equation}
\EE_{r}[C_T^{2}] = T^{3}f_{3}(rT)
\end{equation}
with 
\begin{equation}
f_{3}(rT) = \frac{1}{4(rT)^{3}}\Big[2(rT)^{2} + rT -6+(5rT+6)\eee^{-rT}\Big].
\end{equation}
The variance is therefore found to be 
\begin{equation}
\Var_r[C_T] = T^{3}f_3(rT) - T^{3}f_{1}^{2}(rT) = T^{3}f_{2}(rT).
\end{equation}
\end{proof}

The result for the mean converges to $\EE_0[D]$ when $rT\downarrow 0$ and scales like $\tfrac34
\EE_0[D]\sqrt{\frac{\pi}{rT}}$ when $rT \to \infty$. Therefore
\begin{equation}
\lim_{T\to\infty} \EE_{r}[T^{-1}C_{T}] = c^*_r = \frac{1}{\sqrt{2r}}.
\end{equation}
The same analysis for the variance yields
\begin{equation}
\lim_{T\to\infty}T^{-1}\Var_{r}[C_{T}]=\lim_{T\to\infty}T\,\Var_{r}[T^{-1}C_{T}] = \frac{3}{4r^{2}}.
\end{equation}
These two results suggest that $(T^{-1}C_T)_{T>0}$ satisfies the LDP. To compute the corresponding 
rate function, we define the function
\begin{equation}
\label{eq:H}
H(x) = -2^{1/3}\frac{\AI (x)}{\Ai'(x)},
\end{equation}
where 
\begin{equation}
\AI(x) = \int_{x}^{\infty}\text{Ai}(t)\,\ddd t
\end{equation}
is the integral Airy function and $\Ai (x)$ is the Airy function \cite[Section 10.4]{Abramowitz65} 
defined, for example, by 
\begin{equation}
\Ai (x) = \frac{1}{\pi}\int_{0}^{\infty}\cos\left(\tfrac13 t^{3}+xt\right)\,\ddd t.
\end{equation}
The next theorem gives an explicit representation of the rate function of $(T^{-1}C_T)_{T>0}$ for values 
below its mean.

\begin{theorem}
\label{thm:ldpbelow}
Let $c_r^*=1/\sqrt{2r}$, and let $s^*_k$ be the largest real root in $s$ of the equation
\begin{equation}
\label{eq:root1}
\frac{r}{(-k)^{2/3}}H\left(\frac{2^{1/3}(s+r)}{(-k)^{2/3}}\right)=1, \qquad k<0.
\end{equation}
Then $(T^{-1}C_{T})_{T>0}$ satisfies the LDP on $(0,c_r^*)$ with speed $T$ and with rate function 
given by the Legendre transform of $s^*_k$.
\end{theorem}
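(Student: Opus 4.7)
The plan is to apply the Gärtner--Ellis theorem to $(T^{-1}C_T)_{T>0}$, identifying the scaled cumulant generating function $\lambda_r(k) = \lim_{T\to\infty} T^{-1}\log \EE_r[\eee^{kC_T}]$ with $s^*_k$ and then taking its Legendre transform. The first step is to compute $\tG_0(k,s)$ for $k<0$ via Feynman--Kac. The function $\psi(x) = \int_0^\infty \eee^{-sT}\,\EE_x[\eee^{k\int_0^T |W_t|\,\ddd t}]\,\ddd T$ solves
\begin{equation}
\tfrac12 \psi''(x) + (k|x|-s)\,\psi(x) = -1, \qquad x\in\R,
\end{equation}
with $\psi$ bounded at infinity and $\psi'(0)=0$ by symmetry. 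For $x\geq 0$, the affine change of variable $y = 2^{1/3}\bigl((-k)^{1/3}x + s/(-k)^{2/3}\bigr)$ converts this into the inhomogeneous Airy equation. Combining the decaying homogeneous solution $\Ai(y)$ with a particular solution built from $\AI(y)$, and using $\psi'(0)=0$ to fix the free constant, yields
\begin{equation}
\tG_0(k,s) = \frac{1}{(-k)^{2/3}}\,H\!\left(\frac{2^{1/3}s}{(-k)^{2/3}}\right),
\end{equation}
with $H$ as in \eqref{eq:H}.

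Inserting this into Theorem~\ref{thm:RR} gives
\begin{equation}
\tG_r(k,s) = \frac{\tG_0(k,r+s)}{1 - r\,\tG_0(k,r+s)},
\end{equation}
whose denominator vanishes precisely at the roots in $s$ of the equation displayed in the statement, of which $s^*_k$ is by definition the largest real one. To extract the exponential growth of $G_r(k,T)$ as $T\to\infty$, I invert the Laplace transform along a Bromwich contour and argue that $s^*_k$ is a simple pole lying strictly to the right of every other singularity of $s \mapsto \tG_r(k,s)$. Picking up the residue at $s^*_k$ contributes a term of order $c(k)\,\eee^{s^*_k T}$, while the deformed contour yields a subexponential remainder, so $\lambda_r(k) = s^*_k$.

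The Gärtner--Ellis theorem is then applied on $k<0$. Implicit differentiation of the defining equation, together with smoothness and strict monotonicity of $H$ on the relevant range of its argument, shows that $k\mapsto s^*_k$ is smooth and strictly convex on $(-\infty,0)$, with $s^*_0 = 0$ and left derivative at $0$ equal to $c_r^*$, consistent with the mean computed in Theorem~\ref{thm:absval}. Verifying that $(s^*_k)'\downarrow 0$ as $k\downarrow-\infty$ provides the required steepness at the left end, so that Gärtner--Ellis delivers the LDP on $(0,c_r^*)$ with rate function $\phi \mapsto \sup_{k<0}\{k\phi - s^*_k\}$, that is, the Legendre transform of $s^*_k$.

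The main obstacle is the spectral/analytic step: verifying that $s^*_k$ is an isolated simple pole that is strictly dominant in the right half of the $s$-plane. The function $H$ has further poles coming from the real zeros of $\Ai'$, and these propagate into further singularities of $\tG_0$, hence of $\tG_r$; one must argue, using the sign and monotonicity of $r\tG_0(k,r+s)$ along the real axis on the principal branch and a careful contour argument off the real axis, that no complex root of $1-r\tG_0(k,r+s)=0$ has larger real part than $s^*_k$. Once this dominance and simplicity are in place, the Feynman--Kac derivation, the Tauberian extraction of $\lambda_r(k)$, and the Gärtner--Ellis conclusion are all standard.
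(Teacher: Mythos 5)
Your proposal follows essentially the same route as the paper: obtain the explicit expression for $\tG_0(k,s)$ in terms of $H$, insert it into the renewal formula of Theorem~\ref{thm:RR}, identify the SCGF $\lambda_r(k)=s^*_k$ as the largest real root of $r\tG_0(k,r+s)=1$, and conclude via the G\"artner--Ellis Theorem and the Legendre transform, using $\lambda_r'(k)\to 0$ as $k\to-\infty$ and $\lambda_r'(k)\to c_r^*$ as $k\uparrow 0$ to identify the LDP on $(0,c_r^*)$. The only differences are presentational: you derive $\tG_0(k,s)$ directly by Feynman--Kac and the Airy equation where the paper quotes Janson's formula, and you spell out the Bromwich-contour/dominant-pole argument that the paper invokes as a known fact from \cite{Meylahn15}.
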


\begin{proof}
With the same rescaling as in \eqref{eq:AATrescaled}, the generating function for $C_{T}$ 
can be written as 
\begin{equation}
G_{0}(k, T) = \EE_0[\eee^{kT^{3/2}D}]. 
\end{equation}
Using \cite[Eq.~(173)]{janson2007}, we have
\begin{equation}
\int_{0}^{\infty}\eee^{-sT}\EE_0[\eee^{-\sqrt{2}T^{3/2}\xi C_T}]\ddd T 
= - \frac{\text{AI}[\xi^{-2/3}s]}{\xi^{2/3}\text{Ai}'[\xi^{-2/3}s]}, \quad \xi>0,
\end{equation}
so that the Laplace transform of $G_0(k,T)$ has the explicit expression
\begin{equation}
\tG_0(k, s) = \frac{1}{(-k)^{2/3}}H\left(\frac{2^{1/3}s}{(-k)^{2/3}}\right), \quad k<0,
\end{equation}
where $H(x)$ is the function defined in (\ref{eq:H}).

With this result, we follow the method detailed in \cite{Meylahn15}: we insert the expression 
for $\tG_0(k,s)$ into (\ref{eq:RR}) to find the expression for $\tG_r(k,s)$ and locate the largest 
real pole of that function, which is known to determine the \emph{scaled cumulant generating 
function} (SCGF) of $C_T$, defined as 
\be
\lambda_{r}(k) = \lim_{T\to \infty}\frac{1}{T}\log G_{r}(k, T).
\label{eq:scgfgen1}
\ee
Due to the form of $\tG_r(k,s)$ in (\ref{eq:RR}), this pole must be given by the largest real root 
of the equation $r\tG_0(k,s+r)=1$, which yields the equation shown in (\ref{eq:root1}). From there 
we apply the G\"artner--Ellis Theorem \cite{H00} by noting that $\lambda_r(k)=s^*_k$ is finite and 
differentiable for all $k<0$. Consequently, the rate function is given by the Legendre transform 
\begin{equation}
\label{eq:LFtransform}
\chi^C_r(c_k) = k c_k - \lambda_{r}(k),
\end{equation} 
where $c_k = \lambda'_r(k)$ for all $k<0$. It can be verified that $\lambda_r'(k)\to 0$ 
as $k\to -\infty$ and $\lambda_r'(k)\to c_r^*$ as $k\uparrow 0$. Thus, the rate function 
is identified on $(0,c_r^*)$.
\end{proof}

\begin{figure}[t]
\centering
\includegraphics{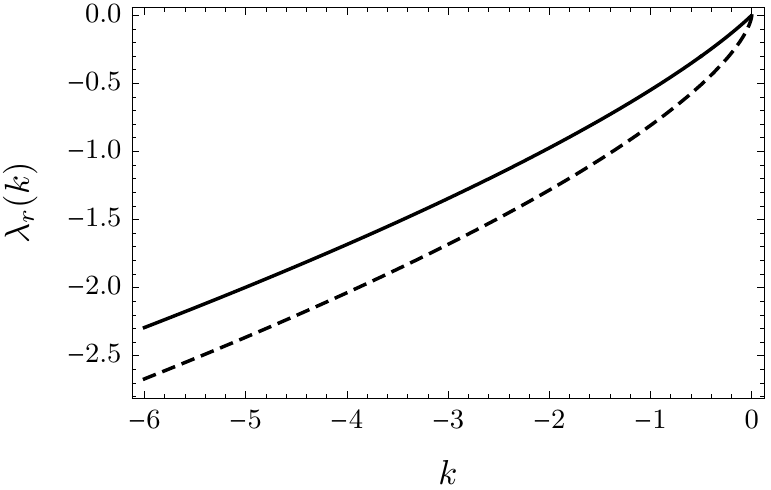}%
\hspace*{0.2in}%
\includegraphics{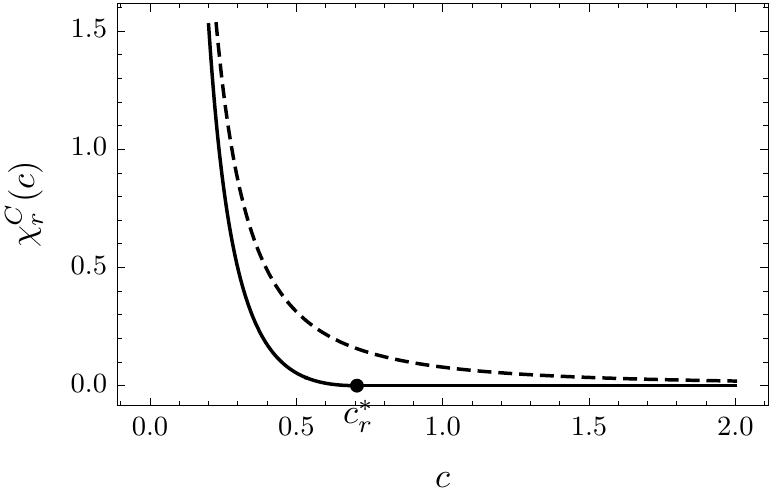}
\caption{Left: SCGF of the absolute area of rBM as a function of $k$ for $r=1$ (full line) and 
$r=0$ (dashed line). Right: Corresponding rate function obtained by Legendre transform for 
$r=1$ (full line) and $r=0$ (dashed line). Above the mean $c_r^*=1/\sqrt{2r}$, $\chi^C_r(c)$ is flat.}
\label{fig:rrate}
\end{figure}

The plot on the left in Fig.~\ref{fig:rrate} shows the SCGF $\lambda_r(k)$, while the plot on the right 
shows the rate function $\chi_r^C(c)$ obtained by solving \eqref{eq:root1} numerically and by 
computing the Legendre transform in \eqref{eq:LFtransform}. The rate function is compared with 
the rate function without resetting, which is given by
\begin{equation}
\label{eq:RBMrate}
\chi^C_0(c) = \frac{2|\zeta_{0}'|^{3}}{27\,c^{2}},
\end{equation} 
where $\zeta_{0}'$ is the first zero of the derivative of the Airy function. The derivation of $\chi_0^C$ 
also follows from the G\"artner--Ellis Theorem and is given in Appendix \ref{app:rateAAT}.

Comparing the two rate functions, we see that $T^{-1}C_T$ has a finite mean $c^*_r$ with resetting. 
Above this value, it is not possible to obtain $\chi^C_r(c)$ from $G_r(k,T)$, since the latter function 
is not defined for $k>0$, which indicates that $\chi^C_r(c)$ is either non-convex or has a zero branch 
for $c>c_r^*$ (see \cite[Sec.~4.4]{touchette2009}). Since this is a special case of Theorem~\ref{thm:genrateprop}, 
the second alternative applies, i.e., $\chi^C_r(c)=0$ for all $c>c_r^*$, which implies that the right tail of 
$T^{-1}C_T$ decays slower than $\eee^{-T}$. 

Similar rate functions with zero branches also arise in 
stochastic collision models \cite{lefevere2011b,gradenigo2013}, as well as in non-Markovian random 
walks \cite{harris2009}, and are related to a speed in the LDP that grows slower than $T$. For the 
absolute area of rBM, we do not know what the exact decay of the density of $T^{-1}C_T$ is above 
the mean or whether, in fact, this density satisfies the LDP. This is an open problem.

\section{Conclusion}

In this paper, we have studied the statistical properties of additive functionals of a variant of Brownian motion that is reset at the origin at random intervals,  
and have provided explicit results for three specific functionals, namely, the occupation time, the area, and the 
absolute area. Functionals of standard Brownian motion have been studied extensively in the past, and come with numerous applications in physics and computer science \cite{Majumdar05,Majumdar05a}. In view of these applications, we expect our results for reset Brownian motion to be relevant in a variety of different contexts, in particular, in search-related problems, queuing theory, and population dynamics, which  have all been analysed in the last few years in connection with resetting.

\begin{acknowledgments}
FdH and JMM are supported by NWO Gravitation Grant no 024.002.003-NETWORKS. HT is 
supported by the National Research Foundation of South Africa (Grants 90322 and 96199) 
and Stellenbosch University (Project Funding for New Appointee). The research was supported 
in part by the International Centre for Theoretical Sciences (ICTS), during a visit by the authors 
as participants in the program \emph{Large Deviation Theory in Statistical Physics: Recent 
Advances and Future Challenges} (Code: ICTS/Prog-ldt/2017/8).
\end{acknowledgments}


\appendix


\section{Large deviation principle}
\label{app:LDP}

Let $\mathcal{S}$ be a Polish (i.e., complete separable metric) space. A family $(P_T)_{T >0}$ of 
probability distributions on $\mathcal{S}$ is said to satisfy the \emph{strong} large deviation principle (LDP) with speed 
$T$ and with rate function $I$ when the following three properties hold:
\begin{itemize}
\item[(1)]
$I \not\equiv \infty$. The level sets of $I$, defined by $\{s \in \mathcal{S}\colon\,I(s) \leq c\}$, 
$c \in [0,\infty)$, are compact.
\item[(2)]
$\limsup_{T\to\infty} T^{-1} \log P_T(C) \leq -I(C)$ for all $C\subset \mathcal{S}$ Borel and closed.
\item[(3)]   
$\liminf_{T\to\infty} T^{-1} \log P_T(O) \geq -I(O)$ for all $O\subset \mathcal{S}$ Borel and open.
\end{itemize}
Here
\begin{equation}
I(S) = \inf_{s\in S} I(s), \qquad S \subset \mathcal{S}.
\end{equation}

The family $(P_T)_{T >0}$ is said to satisfy the \emph{weak} LDP when in (1) we only require the level sets 
to be closed and in (2) we only require the upper bound to hold for compact sets. The weak LDP
together with \emph{exponential tightness}, i.e.,
\begin{equation}
\lim_{ {K \uparrow \mathcal{S}} \atop {K \text{ compact}}} 
\limsup_{T\to\infty} T^{-1} \log P_T(\mathcal{S} \setminus K) = -\infty,
\end{equation}
implies the strong LDP. For further background on large deviation theory, the reader is referred 
to \cite[Chapter III]{H00} and \cite{H00,touchette2009}.

\section{Rate function of the absolute area for BM}
\label{app:rateAAT}

The SCGF, defined in (\ref{eq:scgfgen1}), is known to be given for BM 
without resetting by the principal eigenvalue of the following differential operator:
\begin{equation}
\mathcal{L}_{k} = \frac{\sigma^{2}}{2}\frac{d^2}{d x^2} + k |x|,\quad x\in \R,
\end{equation}
called the tilted generator, so that
\begin{equation}
\label{eq:eigqm}
(\mathcal{L}_{k} \psi_{k})(x) = \lambda(k)\psi_{k}(x),
\end{equation}
where $\psi_k(x)$ is the associated eigenfunction satisfying the natural (Dirichlet) boundary conditions 
$\psi(x)\to 0$ as $x\to\pm\infty$ \cite{touchette2017}. Since $|W_t|$ has the same distribution as BM 
reflected at zero, we can also obtain $\lambda(k)$ as the principal eigenvalue of 
\begin{equation}
\mathcal{L}_{k} = \frac{\sigma^{2}}{2}\frac{d^2}{d x^2} + k x,\quad x\geq 0,
\end{equation}
with the Neumann boundary condition $\psi'_k(0)=0$, which accounts for the fact that there is no 
current at the reflecting barrier, in accordance with the Dirichlet boundary condition $\psi_k(\infty)=0$.

The solution $\psi_k(x)$ of both eigenvalue problems is given in terms of the Airy function, $\Ai(\zeta)$,
with
\begin{equation}
\zeta = \Big(\frac{-2k}{\sigma^{2}}\Big)^{1/3} \Big(x - \frac{\lambda(k)}{k}\Big).
\end{equation}
Imposing the boundary conditions, we get a discrete eigenvalue 
spectrum, given by 
\begin{equation}
\lambda^{(i)}(k) = \Big(\frac{\sigma^{2}}{2}\Big)^{1/3}(-k)^{2/3}\zeta'_{i},
\end{equation}
where $\zeta'_{i}$ is the $i$th zero of $\Ai'(x)$. 

The largest eigenvalue $\lambda^{(0)}(k)$ corresponds to the SCGF $\lambda_0(k)$ without resetting 
(see Fig.~\ref{fig:rrate}), which yields the rate function $\chi^C_0$ shown in \eqref{eq:RBMrate}, after 
applying the Legendre transform shown in \eqref{eq:LFtransform}. The function $\lambda_0(k)$ is 
defined only for $k\leq 0$, but since it is steep at $k=0$, the G\"artner--Ellis Theorem can be applied 
in this case.

Note that the spectral method can also be used to find the rate function $\chi^C_r$ of the absolute 
area of rBM, following the method explained in \cite{Meylahn15}. However, the expression for the 
generating function $\tG_0(k,s)$ in this case is explicit, so it is more convenient to use this expression, as is
done in the proof of Theorem~\ref{thm:absval}, in combination with the renewal formula of 
Theorem~\ref{thm:RR}.



\begin{thebibliography}{99}

\bibitem{Abramowitz65}
M.\ Abramowitz and I.\ Stegun, 
\emph{Handbook of Mathematical Functions}, Dover, (1965).

\bibitem{Avrachenkov13}
K.\ Avrachenkov, A.\ Piunovskiy, and Y.\ Zhang,
Markov processes with restart,
J.\ Appl.\ Prob.\ \textbf{50}, 960 (2013).

\bibitem{Avrachenkov17} 
K.\ Avrachenkov, A.\ Piunovskiy, and Y.\ Zhang,
Hitting times in {M}arkov chains with restart and their application to network centrality,
Methodol.\ Comput.\ Appl.\ Prob., 1 (2017).

\bibitem{Belan18}
S.\ Belan, 
Restart could optimize the probability of success in a {B}ernoulli trial, 
Phys.\ Rev.\ Lett.\ \textbf{120}, 080601 (2018).

\bibitem{Benichou07}
O.\ B\'{e}nichou, M.\ Moreau, P.-H.\ Suet, and R.\ Voituriez,
Intermittent search process and teleportation,
J.\ Chem.\ Phys.\ \textbf{126}, 234109 (2007).

\bibitem{Bodrova19}
A.~S.\ Bodrova, A.~V.\ Chechkin, I.~M.\ Sokolov,
Scaled Brownian motion with renewal resetting,
arXiv.1812.05667 (2018)

\bibitem{Brockwell85}
P.~J.\ Brockwell,
The extinction time of a birth, death and catastrophe process and 
of a related diffusion model,
Adv.\ Appl.\ Prob.\ \textbf{17}, 42--52 (1985).

\bibitem{Chechkin18}
A.\ Chechkin and I.~M.\ Sokolov,
Random search with resetting: A unified renewal approach,
Phys.\ Rev.\ Lett.\ \textbf{121}, 050601 (2018).

\bibitem{Dharmaraja15}
S.\ Dharmaraja, A.\ Di Crescenzo, V.\ Giorno, and A.~G.\ Nobile, 
A continuous-time Ehrenfest model with catastrophes and its 
jump-diffusion approximation,
J.\ Stat.\ Phys.\ \textbf{161}, 1--20 (2015).

\bibitem{Evans11}
M.~R.\ Evans and S.~N.\ Majumdar, 
Diffusion with stochastic resetting,
Phys.\ Rev.\ Lett.\ \textbf{106}, 160601 (2011).

\bibitem{Evans11a}
M.~R.\ Evans and S.~N.\ Majumdar, 
Diffusion with optimal resetting,
J.\ Phys.\ A: Math.\ Theor.\ \textbf{44}, 435001 (2011).

\bibitem{Evans13}
M.~R.\ Evans, S.~N.\ Majumdar, and K.\ Mallick, 
Optimal diffusive search: Nonequilibrium resetting versus equilibrium dynamics,
J.\ Phys.\ A: Math.\ Theor.\ \textbf{46}, 185001 (2013).

\bibitem{gradenigo2013}
G.~Gradenigo, A.~Sarracino, A.~Puglisi, and H.~Touchette,
Fluctuation relations without uniform large deviations,
J. Phys. A: Math. Theor. \textbf{46}, 335002 (2013).

\bibitem{harris2009}
R.~J. Harris and H.~Touchette,
Current fluctuations in stochastic systems with long-range memory,
J. Phys. A: Math. Theor. \textbf{42}, 342001 (2009).

\bibitem{Harris17}
R.~J.\ Harris and H.\ Touchette,
Phase transitions in large deviations in reset processes,
J.\ Phys.\ A: Math.\ Theor.\ \textbf{50}, 10LT01 (2017).

\bibitem{H00}
F.\ den Hollander,
\emph{Large Deviations},
Fields Institute Monographs 14,
AMS, Providence RI (2000).

\bibitem{janson2007}
S.~Janson,
Brownian excursion area, {W}right's constants in graph enumeration, and other {B}rownian areas,
Prob. Surveys \textbf{4}, 80--145 (2007).

\bibitem{kac1946}
M.~Kac,
On the average of a certain {W}iener functional and a related limit theorem in calculus of probability.
Trans. Am. Math. Soc. \textbf{59}, 401--414 (1946).

\bibitem{Kusmierz14}
L.\ Kusmierz, S.~N.\ Majumdar, S.\ Sabhapandit, and G.\ Schehr, 
First order transition for the optimal search time of L\'{e}vy flights 
with resetting,
Phys.\ Rev.\ Lett.\ \textbf{113}, 220602 (2014).

\bibitem{Kyriakidis94}
E.~G.\ Kyriakidis, 
Stationary probabilities for a simple immigration-birth-death process 
under the influence of total catastrophes, 
Stat.\ Prob.\ Lett.\ \textbf{20}, 239--240 (1994).

\bibitem{lefevere2011b}
R.~Lefevere, M.~Mariani, and L.~Zambotti,
Large deviations of the current in stochastic collisional dynamics.
J. Math. Phys. \textbf{52}, 033302 (2011).

\bibitem{Levy39}
P.\ L\'evy,
Sur certains processus stochastiques homog\`enes,
Compos.\ Math.\ \textbf{7}, 283--339 (1940).

\bibitem{luby1993}
M.~Luby, A.~Sinclair, and D.~Zuckerman,
Optimal speedup of {L}as {V}egas algorithms.
Info. Proc. Lett. \textbf{47},173--180 (1993).

\bibitem{Majumdar05}
S.~N.\ Majumdar, 
Brownian functionals in physics and computer science,
Curr.\ Sci.\ \textbf{89}, 2076--2092 (2005).

\bibitem{Majumdar05a}
S.~N.\ Majumdar and A. Comtet, 
Airy Distribution Function: From the area under a Brownian excursion 
to the maximal height of fluctuating interfaces,
J.\ Stat.\ Phys.\ \textbf{119}, 777 (2005).

\bibitem{manrubia1999}
S.~C. Manrubia and D.~H. Zanette,
Stochastic multiplicative processes with reset events,
Phys. Rev. E \textbf{59}, 4945--4948 (1999).

\bibitem{Meylahn15}
J.~M.\ Meylahn, 
\emph{Biofilament Interacting with Molecular Motors}, 
Master thesis, Department of Physics, Stellenbosch University (2015).

\bibitem{Meylahn15a}
J.~M.\ Meylahn, S.\ Sabhapandit, and H.\ Touchette, 
Large deviations for Markov processes with resetting,
Phys.\ Rev.\ E \textbf{92}, 062148 (2015).

\bibitem{montero2017}
M.~Montero, A.~Mas\'o-Puigdellosas, and J.~Villarroel,
Continuous-time random walks with reset events,
Eur. Phys. J. B \textbf{90},176 (2017).

\bibitem{Pakes78}
A.~G.\ Pakes,
On the age distribution of a Markov chain,
J.\ Appl.\ Prob.\ \textbf{15}, 67--77 (1978).

\bibitem{Pakes97}
A.~G.\ Pakes,
Killing and resurrection of Markov processes,
Comm.\ Stat.\ Stoch.\ Models \textbf{13}, 255--269 (1997).

\bibitem{pal2017}
A.~Pal and S.~Reuveni,
First passage under restart,
Phys. Rev. Lett. \textbf{118}, 030603 (2017).

\bibitem{reuveni2016}
S.~Reuveni,
Optimal stochastic restart renders fluctuations in first passage times universal,
Phys. Rev. Lett. \textbf{116},170601 (2016).

\bibitem{roldan2016}
E.~Rold\'an, A.~Lisica, D.~S\'anchez-Taltavull, and S.~W. Grill,
Stochastic resetting in backtrack recovery by {RNA} polymerases,
Phys. Rev. E \textbf{93}, 062411 (2016).

\bibitem{Takacs93}
L.\ Tak\'{a}cs, 
On the distribution of the integral of the absolute value of the Brownian motion,
Ann.\ Appl.\ Probab.\ \textbf{3}, 186--197 (1993).

\bibitem{tolmatz2003}
L.~Tolmatz,
The saddle point method for the integral of the absolute value of the {B}rownian motion,
Discrete Maths. and Theoret. Comp. Sci. \textbf{AC}, 309--324 (2003).

\bibitem{touchette2009}
H.~Touchette,
The large deviation approach to statistical mechanics,
Phys. Rep. \textbf{478}, 1--69 (2009).

\bibitem{touchette2017}
H.~Touchette,
Introduction to dynamical large deviations of {M}arkov processes,
Physica A \textbf{504}, 5--19 (2018).

\bibitem{visco2010}
P.~Visco, R.~J. Allen, S.~N. Majumdar, and M.~R. Evans,
Switching and growth for microbial populations in catastrophic responsive environments,
Biophys. J. \textbf{98}, 1099--1108 (2010).

\bibitem{Widder41}
D.~V.\ Widder, 
\emph{The Laplace Transform},
Princeton University Press, Princeton (1941). 

\end{thebibliography}

\end{document}